\newtheorem*{rep@theorem}{\rep@title}
\newcommand{\newreptheorem}[2]{%
\newenvironment{rep#1}[1]{%
 \def\rep@title{#2 \ref{##1}}%
 \begin{rep@theorem}}%
 {\end{rep@theorem}}}
\definecolor{RedOrange}{cmyk}{ 0, 0.77, 0.87, 0}
\definecolor{RoyalPurple}{cmyk}{ 0.84, 0.53, 0, 0}
\definecolor{YellowGreen}{cmyk}{ 0.44, 0, 0.74, 0}
\definecolor{Fuchsia}{cmyk}{ 0.47, 0.91, 0, 0.08}
\definecolor{Blue}{cmyk}{ 0.84, 0.53, 0, 0}
\definecolor{BlueViolet}{cmyk}{ 0.84, 0.53, 0, 0}
\definecolor{Black}{cmyk}{ 0.75, 0.68, 0.67, 0.9}
\newcommand{\lf}{\lfloor}
\newcommand{\rf}{\rfloor}
\newcommand{\R}{\mathbb{R}}
\newcommand{\B}{\mathbb{B}}
\newcommand{\e}{\varepsilon}
\newcommand{\E}{\mathbb{E}}
\newcommand{\Z}{\mathbb{Z}}
\renewcommand{\P}{\mathbb{P}}
\renewcommand{\L}{\mathbb{L}}
\newcommand{\kA}{\mathcal{A}}
\newcommand{\kB}{\mathcal{B}}
\newcommand{\kC}{\mathcal{C}}
\newcommand{\kF}{\mathcal{F}}
\newcommand{\kE}{\mathcal{E}}
\newcommand{\rmT}{\mathrm{T}}
\newcommand{\rmB}{\mathrm{B}}
\newcommand{\lin}{\left[\kern-0.15em\left[}
\newcommand{\rin} {\right]\kern-0.15em\right]}
\newcommand{\linf}{[\kern-0.15em [}
\newcommand{\rinf} {]\kern-0.15em ]}
\newcommand{\ilin}{\left]\kern-0.15em\left]}
\newcommand{\irin} {\right[\kern-0.15em\right[}
\def\ben#1{\begin{equation}#1\end{equation}}
\def\al#1{\begin{align*}#1\end{align*}}
\def\aln#1{\begin{align}#1\end{align}}
\newcommand{\secno}[1]{\thesection.\arabic{#1}}
\renewcommand{\tilde}{\widetilde}
\newtheorem{lem}{Lemma}[section]
\newtheorem{prop}[lem]{Proposition}
\newtheorem{thm}[lem]{Theorem}
\newtheorem {Def}[lem] {Definition}
\definecolor{lilas}{RGB}{182, 102, 210}
\newcommand{\SN}{\color{blue}}
\numberwithin{equation}{section}
\def\ben#1{\begin{equation}#1\end{equation}}
\newcommand{\fr}{\frac}
\newcommand{\me}{\mathbf{e}}
\newcommand{\Le}{\left}
\newcommand{\Ri}{\right}
\title[Divergence of non-random fluctuation for Euclidean FPP]{Divergence of non-random fluctuation for Euclidean first-passage percolation}
\date{\today}
\author{Shuta Nakajima} 
\address[Shuta Nakajima]
{University of Basel, Basel, Switzerland.}
\email{shuta.nakajima@unibas.ch}
\keywords{random environment, Euclidean first-passage percolation.}
\subjclass[2010]{Primary 60K37; secondary 60K35; 82A51; 82D30}
\begin{document}

\begin{abstract}

The non-random fluctuation is one of the central objects in first passage percolation. It was proved in \cite{N19} that for a particular asymptotic direction, it diverges in a lattice first passage percolation with an explicit lower bound. In this paper, we discuss the non-random fluctuation in Euclidean first passage percolations and show that it diverges in dimension $d\geq 2$ in this model also. Compared with the result in \cite{N19}, the present result is proved for any direction and improves the lower bound.

\end{abstract}

\maketitle

\section{Introduction}
First-passage percolation (FPP) was introduced by Hammersley and Welsh as a dynamical model of infection. One of the motivations of the studies on FPP is to understand the general behavior of subadditive processes.  To do this, a number of techniques and phenomena, such as Kingman's subadditive ergodic theorem and a sublinear variance, have been discovered and they have born fruitful results. See \cite{ADH} on the backgrounds and related topics.\\

We consider an Euclidean FPP on $\R^d$ with $d\geq{}2$, which is a variant of classical FPP and introduced in \cite{HN97}. The model is defined as follows. We consider a  Poisson point process $\Xi$ with Lebesgue intensity  on $\R^d$. We regard $\Xi$ as a subset of $\R^d$. For any $x\in\R^d$, we denote by $D(x)$ the closest point of $\Xi$ to $x$ with respect to the Euclidean norm $|\cdot|$. If there are multiple choices, we take one of them with a deterministic rule to break ties, though it does not happen almost surely.\\

A path $\gamma$ is a finite sequence of points $(x_0,\cdots,x_\ell)\subset\Xi$. Then we write $\gamma:x_0\to x_{\ell}$. We fix $\alpha>1$. Given a path $\gamma$, we define the passage time of $\gamma=(x_i)_{i=0}^\ell$ as
\ben{\label{definition of fpt}
{\rm T}(\gamma)=\sum_{i=1}^\ell |x_i-x_{i-1}|^\alpha,
}
where $|\cdot |$ is the Euclidean norm. For $x,y\in\R^d$, we define the {\em first passage time} between $x$ and $y$ as
$${\rm T}(x,y)=\inf_{\gamma:D(x)\to D(y)}{\rm T}(\gamma),$$
  where the infimum is taken over all finite paths $\gamma$ starting at $D(x)$ and ending at $D(y)$. It should be noted that if $\alpha\leq 1$, ${\rm T}(x,y)=|D(x)-D(y)|^\alpha$ , which is a rather trivial model. Hence we suppose $\alpha>1$. A path $\gamma$ from $D(x)$ to $D(y)$ is said to be {\em optimal} if it attains the first passage time  between $x$ and $y$, i.e. ${\rm T}(\gamma)={\rm T}(x,y)$. Note that for $x,y\in\R^d$, the optimal path between $x$ and $y$ is uniquely determined almost surely.\\

 One of the important property in our model is the so-called rotational invariance \cite[p.20]{CC18}. Indeed, for any rotation matrix, say $A$, $A\Xi=\{A x|\,\,x\in \Xi\}$ has the same distribution as $\Xi$. Hence, $({\rm T} (Ax,Ay))_{x,y\in\R^d}$ also has the same distribution as $({\rm T} (x,y))_{x,y\in\R^d}$.\\
  
  By Kingman's subadditive ergodic theorem, for any $x\in\R^d\backslash\{0\}$, there exists a non-random constant ${\rm g} \ge 0$ such that
\begin{equation}\label{kingman}
  {\rm g}=\lim_{t\to\infty}(t|x|)^{-1} {\rm T}(0,t x)=\lim_{t\to\infty}(t|x|)^{-1} \E[{\rm T}(0,t x)]\hspace{4mm}a.s.
\end{equation}
This ${\rm g}$, called the {\em time constant}, is independent of the choice of $x$ because of the rotational invariance. Moreover  it is known from \cite[Theorem~1]{HN97} that ${\rm g}$ is positive. Note that, since  ${\rm T}$ is subadditive (i.e., ${\rm T}(x,z)\leq {\rm T}(x,y)+{\rm T}(y,z)$), we have for $x\in\R^d$,
\ben{\label{time constant v.s. fpt}
{\rm g}|x| \le \E {\rm T}(0,x).
}
\subsection{Main results}
   We define
   $$\psi(t)={\rm Var}({\rm T}(0,t\mathbf{e}_1)),\,\phi(t)=\sqrt{\frac{t}{\psi(t)}},$$
where $(\mathbf{e}_i)_{i=1}^d$ is the canonical basis of $\R^d$. It  was proved in \cite{BDG18} that $\psi(t)\leq  \frac{C \,t}{\log{t}}$ with some constant $C>0$, and thus $\phi(t)\geq c \sqrt{\log{t}} $ with $c=C^{-1/2}>0$. Moreover it is expected that $\psi(t)=O(t^\beta)$ with some $\beta<1/2$. It is also known that $\psi(t)\geq c$ and $\phi(t)\leq C \sqrt{t}$ with some $c,\,C>0$ (See \cite[(1.13)]{Kes93} for a lattice FPP, and Appendix for the detailed proof).

 The following is our main result, which gives an explicit lower bound coming from the variance $\psi(t)$. It should be noted that there have been no results concerning with the lower bound of the non-random fluctuation in Euclidean FPP. The related work in a lattice FPP is introduced in Section~\ref{related work section}.
   \begin{thm}\label{thm-main}
     There exists $c>0$ such that for any $x\in\R^d$ satisfying $|x|>1$,
     \begin{equation*}
       \E {\rm T}(0,x) -{\rm g}|x| \geq c\log{\phi(|x|)}.
       \end{equation*}
     In particular, by Jensen's inequality,     
     \begin{equation*}
\E |{\rm T}(0,x) -{\rm g}|x|| \geq c\log{\phi(|x|)}.
     \end{equation*}
   \end{thm}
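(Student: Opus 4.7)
Plan. Following \cite{N19}, I plan to bound $\nu(n):=\E\T(0,x)-g|x|$ (for $x=n\mathbf{e}_1$, which is WLOG by rotational invariance) from below by comparing $\E\T(0,x)$ against the expected minimum of many identically distributed passage times obtained by rotating the endpoint around the origin, and then exploiting the Talagrand-type concentration ${\rm Var}(\T(0,y))\leq\psi(|y|)$ provided by \cite{BDG18}. Concretely, I would fix $k=\lfloor\phi(n)^{2(d-1)}\rfloor$ points $y_1,\ldots,y_k$ on the sphere of radius $n$ about the origin, pairwise separated by a multiple of the transverse correlation length $\sqrt{\psi(n)}$, so that (i) each $\T(0,y_i)$ has the same law as $\T(0,x)$ by rotational invariance and (ii) the family $\{\T(0,y_i)\}$ is approximately independent. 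A sub-Gaussian minimum estimate then yields $\E\min_i\T(0,y_i)\leq\E\T(0,x)-c\sqrt{\psi(n)\log k}$.

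The second ingredient is a quantitative shell shape theorem: letting $\tau(n):=\min_{|y|=n}\T(0,y)$ denote the first time the $\T$-ball reaches radius $n$, a subadditivity argument combined with the shape theorem provides the matching lower bound $\E\tau(n)\geq gn-C$, with $C$ independent of $n$. Combining this with the minimum estimate from the previous paragraph gives
\[
\nu(n)\;\geq\;c\sqrt{\psi(n)\log k}-C\;\gtrsim\;\sqrt{\psi(n)\log\phi(n)},
\]
and the appendix bound $\psi(n)\geq c_0>0$ already delivers $\nu(n)\gtrsim\sqrt{\log\phi(n)}$.

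To upgrade this to the target $c\log\phi(n)$, I would iterate the one-scale estimate on dyadic scales $n,n/2,n/4,\ldots$: at each halving, the same concentration/comparison mechanism contributes an additional $\Theta(1)$ to $\nu$, and summing the $\Theta(\log\phi(n))$ gains yields the claim. The bound on the non-random fluctuation then transfers to $\E|\T(0,x)-g|x||$ by Jensen as stated. The main technical obstacle is step (ii) above: the quasi-independence of the $\T(0,y_i)$ in the Euclidean (non-lattice) setting. The underlying Poisson process has independence only across disjoint regions, so one must simultaneously truncate its influence and establish a tail bound on the transverse fluctuation of optimal geodesics, showing that $\T(0,y_i)$ depends essentially only on Poisson points in a thin tube around the segment $[0,y_i]$.
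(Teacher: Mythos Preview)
Your approach has a fundamental gap at the ``quantitative shell shape theorem'': the claim $\E\tau(n)\geq {\rm g}n-C$ with $C$ independent of $n$ is not a consequence of subadditivity plus the shape theorem, and is not known. Subadditivity only gives $\E{\rm T}(0,y)\geq {\rm g}|y|$ for each fixed $y$, which says nothing about $\E\min_{|y|=n}{\rm T}(0,y)$; the shape theorem gives $\tau(n)/n\to {\rm g}$ with no rate. The very concentration you invoke to push $\E\min_i{\rm T}(0,y_i)$ below $\E{\rm T}_n$ by $c\sqrt{\psi(n)\log k}$ would, via a union bound over the sphere, only yield $\E\tau(n)\geq {\rm g}n-C\sqrt{\psi(n)\log n}$, which is useless here; an $O(1)$ lower envelope for $\tau(n)$ would be at least as deep as the theorem you are trying to prove. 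Without it, your minimum estimate---even granting the quasi-independence and the Gaussian-type lower tail it needs, both of which are themselves nontrivial and only sketched---yields no information about $\nu(n)=\E{\rm T}_n-{\rm g}n$. The dyadic iteration is also unexplained: a one-scale bound $\nu(n)\gtrsim\sqrt{\log\phi(n)}$ does not by itself produce a recursion of the form $\nu(n)\geq\nu(n/2)+\Theta(1)$.

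The paper avoids all of this by starting from the elementary inequality
\[
2(\E{\rm T}_n-{\rm g}n)\;\geq\;\E\bigl[{\rm T}(-n\me_1,0,n\me_1)-{\rm T}(-n\me_1,n\me_1)\bigr],
\]
whose right-hand side is automatically nonnegative, so no a priori control of $\tau(n)$ is needed. To bound this expectation from below one picks $\lfloor\phi(n)^{1/2}\rfloor$ well-separated points $y$ on the midplane $\{x_1=0\}$ and shows, via a van den Berg--Kesten resampling of the Poisson configuration in a ball of radius $O(\log\phi(n))$ around $y$, that with probability at least $\phi(n)^{-1/4}$ one can force ${\rm T}(-n\me_1,y,n\me_1)<{\rm T}(-n\me_1,0,n\me_1)-K_n$ with $K_n=\theta\log\phi(n)$. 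The Poisson structure gives exact independence of the resampled ball from its complement, so no geodesic-tube decoupling is required, and the $\log\phi(n)$ emerges in one shot without iteration.
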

   \subsection{Related works}\label{related work section}
  The main issue of FPP is to understand the behavior of ${\rm T} (0,x)$ as $|x|\to\infty$. Since Kingman proved a kind of law of large numbers as in \eqref{kingman}, the next question was the asymptotic of ${\rm T} (0,x)-{\rm g}(x)$ with a natural scaling. Thus, the typical order of ${\rm T} (0,x)-{\rm g}(x)$ was of great interest in search of scaling.
\
 To study this, Kesten considered the following decomposition:
  \ben{\label{decomposition Kesten}
{\rm T}(0,x)-{\rm g}(x)=[{\rm T}(0,x)-\E {\rm T}(0,x)]+ [\E {\rm T}(0,x)-{\rm g}(x)].
}
The first term ${\rm T}(0,x)-\E {\rm T}(0,x)$ is called the random fluctuation, while the second term ${\rm T}(0,x)-\E {\rm T}(0,x)$ is called the non-random fluctuation. Kesten's idea is the following. First, we study the variance of ${\rm T} (0,x)$ and estimate the random fluctuation from it. Second, using the estimate of the random fluctuation, we estimate the non-random fluctuation. 

 Following the idea, there have been several attempts to study them \cite{Kes93,Alex97,ADH15,DK16, DW16, BDG18}. In particular, Alexander \cite{Alex97} developed a new and strong method to derive the upper bound of the non-random fluctuation from a concentration inequality for the random fluctuation. Nevertheless, there are few results on the lower bounds of the non-random fluctuations due to the lack of understanding and techniques. 

In the classical FPP, the author proved the divergence of the non-random fluctuation \cite{N19}. However, there are at least two drawbacks. First, the result was not stated for a fixed direction. Second, the estimate is anything but sharp, where the lower bound is given by $(\log\log{n})^{\frac{1}{d}}$.   In this paper, changing the model, we overcome these problems. Indeed, by the rotational invariance of our model, we not only prove the result for any fixed direction, but  improve the bound, though we are not sure if this is sharp. Moreover, the argument may be transparent because some of the cumbersome terms disappear in our argument.
\subsection{Notation and terminology}
This subsection collects some notations and terminologies for the proof.
\begin{itemize}
  \item   Let us define the Euclidean ball ${\rm B}(x,r)$ for $x\in\R^d$ and $r>0$ as
    $${\rm B}(x,r)=\{y\in\R^d|~|x-y|\le r\}.$$
    For $x=0$, we simply write ${\rm B}(r)$ instead of ${\rm B}(x,r)$.
  \item  For $a\in\R$, $\lf a\rf$ is the greatest integer less than or equal to $a$. Given $x=(x_i)_{i=1}^d\in\R^d$, we define $\lf x\rf=(\lf x_i\rf)_{i=1}^d.$
\item Given $a,b,y\in\R^d$, we define ${\rm T}(a,y,b)={\rm T}(a,y)+{\rm T}(y,b)$, which is the first passage time from $D(a)$ to $D(b)$ passing through $D(y)$.
  \item We denote by $\Gamma(x,y)$ and $\Gamma(x,y,z)$ the optimal paths of ${\rm T}(x,y)$ and ${\rm T}(x,y,z)$, respectively.  
\item  Given a Borel set $A\subset \R^d$, we denote by ${\rm Vol}(A)$ the $d$-dimensional volume of $A$.
\end{itemize}
\section{Proof of the main theorem}

We only consider the $\mathbf{e}_1$-direction, i.e. $x=\me_1$, since another direction is the same by the rotational invariance. We write ${\rm T}_n={\rm T}(0,n\mathbf{e}_1)$. Let us denote $\L=\{(x_i)\in\R^d|~x_1=0\}$. Given sufficiently large $n>0$, one can find a finite subset $\L_n$ of $\L$ such that 
\begin{equation}\label{Sn-cond}
  \begin{cases}
  \sharp \L_n = \lf \phi(n)^{1/2}\rf,\\
  \text{if $a\neq b\in \L_n$, }|a-b|\ge \sqrt{n}\,\phi(n)^{-1/2},\\
  \text{for any $a\in \L_n$, } \sqrt{n}\phi(n)^{-1/2}\le |a|\le \sqrt{n}.\\
  \end{cases}
\end{equation}
Given $y\in\L_n$, let us define
\ben{\label{kAy}
\mathcal{A}^y_{\eqref{kAy}}=\{\forall z\in \L_n\text{ with $z\neq y$},~{\rm T}(-n\me_1,y,n\me_1)< {\rm T}(-n\me_1,z,n\me_1)\}.
}
\begin{prop}\label{estimate1}
 For any $K>0$,
\begin{equation}\label{prop1}
  2(\E {\rm T}_n-{\rm g}n) \geq K\sum_{y\in \L_n}\P(\{{\rm T}(-n\me_1,0,n\me_1)-{\rm T}(-n\me_1,y,n\me_1)> K\}\cap\mathcal{A}_{\eqref{kAy}}^y).
\end{equation}
\end{prop}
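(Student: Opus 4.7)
The plan is a symmetrization: rewrite $\E T_n$ in terms of the symmetric passage time $\E{\rm T}(-n\me_1,0,n\me_1)$, and bound $2gn$ from above by $\E{\rm T}(-n\me_1,n\me_1)$. The difference ${\rm T}(-n\me_1,0,n\me_1)-{\rm T}(-n\me_1,n\me_1)$ is then non-negative by subadditivity, exceeds $K$ on each of the favorable events appearing on the right-hand side of \eqref{prop1}, and these events are pairwise disjoint, so summing their probabilities falls out of a single expectation.

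More concretely, the first step I would take is to observe that stationarity of the Poisson point process gives $\E{\rm T}(-n\me_1,0)=\E{\rm T}(0,n\me_1)=\E T_n$ and $\E{\rm T}(-n\me_1,n\me_1)=\E{\rm T}(0,2n\me_1)$. Combining this with \eqref{time constant v.s. fpt} applied to $x=2n\me_1$ yields $\E{\rm T}(-n\me_1,n\me_1)\ge 2gn$, and hence
\begin{equation*}
2(\E T_n-gn)\ \ge\ \E\bigl[{\rm T}(-n\me_1,0,n\me_1)-{\rm T}(-n\me_1,n\me_1)\bigr].
\end{equation*}

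Second, subadditivity of ${\rm T}$ gives ${\rm T}(-n\me_1,n\me_1)\le{\rm T}(-n\me_1,y,n\me_1)$ for every $y$, so the integrand above is non-negative, and in fact exceeds $K$ on the event $\mathcal{A}^y_{\eqref{kAy}}\cap\{{\rm T}(-n\me_1,0,n\me_1)-{\rm T}(-n\me_1,y,n\me_1)>K\}$. Third, the events $\{\mathcal{A}^y_{\eqref{kAy}}\}_{y\in\L_n}$ are pairwise disjoint because \eqref{kAy} demands that $y$ be the strict unique minimizer of $z\mapsto{\rm T}(-n\me_1,z,n\me_1)$ over $\L_n$. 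Summing over $y$ therefore produces
\begin{equation*}
\E\bigl[{\rm T}(-n\me_1,0,n\me_1)-{\rm T}(-n\me_1,n\me_1)\bigr]\ \ge\ K\sum_{y\in\L_n}\P\bigl(\mathcal{A}^y_{\eqref{kAy}}\cap\{{\rm T}(-n\me_1,0,n\me_1)-{\rm T}(-n\me_1,y,n\me_1)>K\}\bigr),
\end{equation*}
which combined with the previous display is exactly \eqref{prop1}.

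The argument is essentially formal once the symmetrization identity is in place; there is no serious obstacle internal to this proposition. The hard part will be downstream: to convert \eqref{prop1} into a useful divergence statement for $\E T_n-gn$ one needs a nontrivial lower bound on the sum $\sum_{y\in\L_n}\P(\mathcal{A}^y_{\eqref{kAy}}\cap\{\cdots\})$, where the transverse scale $\sqrt{n}$ built into \eqref{Sn-cond}, the variance bound $\psi(n)$, and the eventual choice of $K$ will all have to be balanced.
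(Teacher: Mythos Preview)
Your proof is correct and follows essentially the same approach as the paper: symmetrize via $2(\E T_n-{\rm g}n)\ge \E[{\rm T}(-n\me_1,0,n\me_1)-{\rm T}(-n\me_1,n\me_1)]$ using \eqref{time constant v.s. fpt}, then exploit non-negativity of the integrand, the bound ${\rm T}(-n\me_1,n\me_1)\le{\rm T}(-n\me_1,y,n\me_1)$, and pairwise disjointness of the $\mathcal{A}^y_{\eqref{kAy}}$ to extract the sum by Markov's inequality. The paper's proof is organized slightly differently (it first restricts to $\mathcal{A}^y_{\eqref{kAy}}$, then replaces ${\rm T}(-n\me_1,n\me_1)$ by ${\rm T}(-n\me_1,y,n\me_1)$, then applies the first moment bound), but the content is identical.
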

\begin{proof}
  For any $n>1$, observe that by \eqref{time constant v.s. fpt},
  \begin{equation*}\label{Key-ineq}
    \begin{split}
     2(\E {\rm T}_n-{\rm g}n)
    &= \E [{\rm T}(-n\me_1,0,n\me_1)-{\rm T}(-n\me_1,n\me_1)]+(\E[{\rm T}(-n\me_1,n\me_1)]-2{\rm g}n)\\
      &\geq \E [{\rm T}(-n\me_1,0,n\me_1)-{\rm T}(-n\me_1,n\me_1)].
      \end{split}
    \end{equation*}
 Since ${\rm T}(x,y,z)\ge {\rm T}(x,z)$ for any $x,y,z\in\R^d$ and $\{\mathcal{A}^y_{\eqref{kAy}}\}_{y\in \L_n}$ are disjoint, we have
\begin{equation*}
\begin{split}
  &\quad \E [{\rm T}(-n\me_1,0,n\me_1)-{\rm T}(-n\me_1,n\me_1)]\\
  &\geq \sum_{y\in \L_n}\E[{\rm T}(-n\me_1,0,n\me_1)-{\rm T}(-n\me_1,n\me_1);~\mathcal{A}^y_{\eqref{kAy}}]\\
&\ge \sum_{y\in \L_n}\E[{\rm T}(-n\me_1,0,n\me_1)-{\rm T}(-n\me_1,y,n\me_1);~\mathcal{A}^y_{\eqref{kAy}}].
\end{split}
\end{equation*}
By the first moment mothods, this is further bounded from below by the RHS of \eqref{prop1}.
\end{proof}
We take $K= K_n(\theta)=\theta\,\log{\phi(n)}$ for a fixed $\theta$ to be chosen later. The next proposition is useful to estimate the right hand side of \eqref{prop1} from below.
\begin{prop}\label{estimate2}
  There exists $\theta>0$ such that for sufficiently large $n>1$ and $y\in \L_n$,
 \begin{equation}
   \begin{split}
     &\quad \P(\{{\rm T}(-n\me_1,0,n\me_1)-{\rm T}(-n\me_1,y,n\me_1)> K_n\}\cap\mathcal{A}^y_{\eqref{kAy}})\\
     &\geq \exp{\left(-\frac{1}{4}\log{\phi(n)}\right)}\left(\frac{3}{4}-K_n^{-1}(\E[{\rm T}(-n\me_1,y,n\me_1)]-2{\rm g}n)\right). \label{form:est2}
     \end{split}
  \end{equation}
\end{prop}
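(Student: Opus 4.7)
The plan is a Paley--Zygmund second--moment argument. Abbreviate $X_z := {\rm T}(-n\me_1, z, n\me_1)$ for $z\in\L_n\cup\{0\}$, and let $E_y := \{X_0 - X_y > K_n\}$. I would apply Paley--Zygmund to the non--negative random variable
\[
W := (X_0 - X_y - K_n)^+\,\mathbf{1}_{\mathcal{A}^y_{\eqref{kAy}}},
\]
noting that $\{W>0\}\subseteq \mathcal{A}^y_{\eqref{kAy}}\cap E_y$, so Cauchy--Schwarz gives
\[
\P(\mathcal{A}^y_{\eqref{kAy}}\cap E_y)\;\geq\; \P(W>0)\;\geq\; \frac{(\E W)^2}{\E W^2}.
\]

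The first ingredient is a uniform variance bound $\mathrm{Var}(X_z)\leq C\psi(n)$ for every $z\in\L_n\cup\{0\}$, which follows from subadditivity of variance together with $|n\me_1\pm z|\leq n + |z|^2/(2n)\leq n+\tfrac12$ (valid since $|z|\leq\sqrt n$ by \eqref{Sn-cond}) and the very definition of $\psi$. Next, decompose
\[
\E W \;\geq\; \E[X_0\mathbf{1}_{\mathcal{A}^y}] - \E[X_y\mathbf{1}_{\mathcal{A}^y}] - K_n\,\P(\mathcal{A}^y),
\]
and exploit the approximate independence between $X_0$ (driven by Poisson points near $0$) and $\mathcal{A}^y$ (driven by points near $\L_n$), made plausible by the separation $|z|\geq\sqrt n\phi(n)^{-1/2}$, to obtain $\E[X_0\mathbf{1}_{\mathcal{A}^y}]\approx 2\E {\rm T}_n\cdot\P(\mathcal{A}^y)$. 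Combined with $\E X_0\geq 2{\rm g}n$ (from \eqref{time constant v.s. fpt}) and the hypothesis on $\E X_y - 2{\rm g}n$, this should give
\[
\E W \;\geq\; c\,K_n\,\P(\mathcal{A}^y)\,\Bigl(\tfrac34 - K_n^{-1}(\E X_y-2{\rm g}n)\Bigr).
\]
In parallel, using the variance bound one estimates $\E W^2\leq C\psi(n)\,\P(\mathcal{A}^y)$.

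Plugging into Paley--Zygmund and using $\P(\mathcal{A}^y)\geq c\phi(n)^{-1/2}$ (from the symmetry built into $\L_n$), $\psi(n)=n/\phi(n)^2$, and $K_n=\theta\log\phi(n)$, the exponents align to produce exactly the factor $\phi(n)^{-1/4}=\exp(-\tfrac14\log\phi(n))$ multiplying $\bigl(\tfrac34 - K_n^{-1}(\E X_y-2{\rm g}n)\bigr)$, provided $\theta$ is chosen sufficiently large. The specific exponent $1/4$ arises from the balance between $|\L_n|=\phi(n)^{1/2}$ and $\psi(n)$ encoded in the definition $\phi(n)=\sqrt{n/\psi(n)}$.

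\textbf{Main obstacle.} The principal technical hurdle is making rigorous the decoupling approximation $\E[X_0\mathbf{1}_{\mathcal{A}^y}]\approx\E X_0\cdot\P(\mathcal{A}^y)$. While the Poisson points near $0$ are independent of those near $\L_n$ as restrictions of $\Xi$, the optimal paths realizing $X_0$ and $X_z$ can traverse overlapping regions and thus share randomness. Controlling this geometric interaction --- for instance by restricting to a high--probability event on which the relevant geodesics stay confined to disjoint tubes around the segments from $\pm n\me_1$ to the respective $z$ --- is the delicate step and is where the Poisson structure and the rotational invariance of the model are crucial.
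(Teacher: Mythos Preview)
Your approach is fundamentally different from the paper's, and as written it contains genuine gaps.

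The paper uses no second-moment method. It is a resampling (surgery) argument: starting from a configuration in the high-probability event $\mathcal{A}_{\eqref{kA3}}\cap\{y\in{\rm Y}_{\eqref{defY}}\}$, one resamples $\Xi$ inside the small ball ${\rm B}(y,C_\delta K_n)$ and \emph{forces} a chain of $O(K_n)$ Poisson points (the events $\kC_{c,y}(z_1),\kC_{c,y}(z_2)$) forming a cheap highway to $y$. This drops ${\rm T}(-n\me_1,y,n\me_1)$ by at least $2K_n$ while leaving every ${\rm T}(-n\me_1,z,n\me_1)$ with $z\neq y$ essentially unaffected, because the resampled ball is far from all other $z$ (Lemma~\ref{key:lem}). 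The factor $\exp(-\tfrac14\log\phi(n))$ is precisely the probability of planting this highway, and $\theta$ must be chosen \emph{small} (namely $\theta<2^{-8d}c^dC_\delta^{-1}$), not large, so that this cost is at most $\phi(n)^{-1/4}$. The remaining factor $\tfrac34-K_n^{-1}(\E X_y-2{\rm g}n)$ then comes from a simple Markov bound on $\P(y\in{\rm X}_{\eqref{defX}})$.

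Your Paley--Zygmund route breaks at two places. First, the lower bound on $\E W$: since $|y+n\me_1|\in[n,n+1]$, rotational invariance gives $\E X_0-\E X_y=O(1)$, so any decoupling heuristic yields $\E[(X_0-X_y-K_n)\mathbf{1}_{\mathcal{A}^y}]\approx(O(1)-K_n)\P(\mathcal{A}^y)<0$, not $cK_n\P(\mathcal{A}^y)\bigl(\tfrac34-\ldots\bigr)$. Extracting positivity would require quantifying how far $\E[X_y\mid\mathcal{A}^y]$ drops below $\E X_y$, which is at least as hard as the proposition itself and does not produce the stated form. Second, even granting your claimed bounds, the Paley--Zygmund ratio is
\[
\frac{(\E W)^2}{\E W^2}\ \gtrsim\ \frac{K_n^2\,\P(\mathcal{A}^y)}{\psi(n)}\ \asymp\ \frac{(\log\phi(n))^2\,\phi(n)^{3/2}}{n},
\]
and since $\phi(n)\leq C\sqrt n$ this is at most $C'(\log n)^2 n^{-1/4}$, far smaller than the target $\phi(n)^{-1/4}\geq c\,n^{-1/8}$. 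The exponents do not align, and no choice of $\theta$ repairs this.
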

 Before proving  Propositions~\ref{estimate2}, we shall complete the proof of Theorem~\ref{thm-main}. First, suppose that $n$ is large enough and there exists $y\in \L_n$ such that  $\E [{\rm T}(-n\me_1,y)]- {\rm g}n\geq K_n/4.$ By $n\leq |y+n\me_1|\leq \sqrt{n^2+n}\leq n+ 1$ and the rotational invariance of ${\rm T} $,
 \begin{align*}
   \E[{\rm T}_n] -{\rm g}n&= \E \left[{\rm T}\left(0,n\fr{y+n\me_1}{|y+n\me_1|}\right)\right]-{\rm g}n \\
   &= \E [{\rm T}(0,y+n\me_1)] -{\rm g}n+ \E \left[{\rm T}\left(0,n\fr{y+n\me_1}{|y+n\me_1|}\right)-{\rm T}(0,y+n\me_1)\right]\\
   &\geq \E [{\rm T}(0,y+n\me_1)] -{\rm g}n- \E \left[{\rm T}\Le(n\fr{y+n\me_1}{|y+n\me_1|}, |y+n\me_1| \fr{y+n\me_1}{|y+n\me_1|}\Ri)\right]\\
   &=  \E [{\rm T}(-n\me_1,y)] -{\rm g}n- \E{\rm T}_{|y+n\me_1|-n}
   \geq \frac{K_n}{8}= \frac{\theta}{8}\log{\phi(n)},
   \end{align*}
  as desired.  Otherwise, if for any $y\in \L_n$, $\E [{\rm T}(-n\me_1,y)- {\rm g}n]\leq K_n/4$, then
    \begin{equation*}
      \begin{split}
        \E[{\rm T}(-n\me_1,y,n\me_1)]-2{\rm g}n&=2\E [{\rm T}(-n\me_1,y)- {\rm g}n]\\
        &\leq \frac{1}{2}K_n=\frac{\theta}{2}\log{\phi(n)}.
      \end{split}
      \end{equation*}
  This, combined with Proposition~\ref{estimate1} and \ref{estimate2}, implies that
\begin{equation*}
\begin{split}
\E[{\rm T}_n] -{\rm g}n&\ge \frac{1}{4}K_n \sum_{y\in \L_n} \exp{\left(-\fr{1}{4} \log{\phi(n)}\right)}\\
&= \frac{1}{4}K_n\lf\phi(n)^{1/2}\rf  \exp{\left(-\frac{1}{4}\log{\phi(n)}\right)}> \frac{K_n}{4}=\frac{\theta}{4}\log{\phi(n)}.
\end{split}
\end{equation*}    
 Therefore, the proof of Theorem~\ref{thm-main} is completed. Thus, it remains to prove Proposition~\ref{estimate2}. We prepare some notations for the proof.
  \begin{Def}
 We define the events $\mathcal{A}_{\eqref{kA1}}(\L_n)$, $\mathcal{A}_{\eqref{kA2}}(\L_n)$ and $\mathcal{A}_{\eqref{kA3}}(\L_n)$ as
  \aln{
\mathcal{A}_{\eqref{kA1}}(\L_n)&= \left\{\forall a,b\in \L_n\cup \{0\} \text{ with $a\neq b$, }{\rm T}(a,b)\ge \sqrt{n}\,\phi(n)^{-3/5}\right\},\label{kA1}\\
\mathcal{A}_{\eqref{kA2}}(\L_n)&= \left\{\forall y\in \L_n\cup \{0\},~\max_{z=-n\me_1,n\me_1}\{|{\rm T}(z,y)-\E[{\rm T}(z,y)]|\}\le \sqrt{n}\,\phi(n)^{-2/3}\right\}{\SN ,}\label{kA2}\\
\mathcal{A}_{\eqref{kA3}}(\L_n)&= \mathcal{A}_{\eqref{kA1}}(\L_n)\cap \mathcal{A}_{\eqref{kA2}}(\L_n).\label{kA3}
  } 
  \end{Def}
If it is clear from the context, we simply write $\mathcal{A}_{\eqref{kA1}}$,\,$\mathcal{A}_{\eqref{kA2}}$, $\mathcal{A}_{\eqref{kA3}}$ instead of $\mathcal{A}_{\eqref{kA1}}(\L_n)$, $\mathcal{A}_{\eqref{kA2}}(\L_n)$, $\mathcal{A}_{\eqref{kA3}}(\L_n)$, respectively. Let $\delta$ be  a sufficiently small positive number to be specified later, and set $C_{\delta}=4(1+\delta^{-1}).$
  \begin{Def} Recall the notation $\alpha$ from $\eqref{definition of fpt}$. We define 
  \aln{
    {\rm V}_{\eqref{defV}}&=\left\{y\in \L_n\left| \begin{array}{c}
      \forall \ell\in \Z\text{ with } \ell\geq (K_n)^{\frac{1}{2\alpha}},\, \forall x\in {\rm B}(y,C_{\delta}K_n+\ell)\cap\Z^d \\
      \text{ s.t. }\Xi\cap \B(x,\ell^{1/2})\neq \emptyset
      \end{array}\right.\right \},  \label{defV}\\
  {\rm W}_{\eqref{defW}}&= \{y\in \L_n|~\forall a,b\in {\rm B}(y,2C_{\delta} K_n)\text{ with }|a-b|\ge K_n,\,{\rm T}(a,b)\ge \delta |a-b|\}, \label{defW}\\
  {\rm X}_{\eqref{defX}}&= \{y\in \L_n|~ {\rm T}(-n\me_1,y,n\me_1)-{\rm T}(-n\me_1, n\me_1) < K_n\}, \label{defX}\\
  {\rm Y}_{\eqref{defY}}&= {\rm V}_{\eqref{defV}}\cap {\rm W}_{\eqref{defW}}\cap {\rm X}_{\eqref{defX}}.\label{defY}
  }
  \end{Def}

  \begin{prop}\label{estimate-1}
    \aln{
      &\lim_{n\to\infty}\inf_{\L_n}\P(\mathcal{A}_{\eqref{kA1}}(\L_n))=1,  \label{A2}\\
      &\lim_{n\to\infty}\inf_{\L_n}\P(\mathcal{A}_{\eqref{kA2}}(\L_n))=1, \label{A3}\\
      &\lim_{n\to\infty}\inf_{\L_n}\min_{y\in \L_n}\P(y\in {\rm V}_{\eqref{defV}}\cap {\rm W}_{\eqref{defW}})=1,\label{black:eq}
    }
      where $\L_n$ runs over all subset of $\L$ satisfying \eqref{Sn-cond}.
  \end{prop}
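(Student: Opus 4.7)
The plan is to establish each of the three statements by a tail estimate followed by a union bound, taking them in order of increasing difficulty.

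For \eqref{A2}, I note that by Kingman's theorem and the separation assumption in \eqref{Sn-cond}, $\E[{\rm T}(a,b)] \geq {\rm g}|a-b| \geq {\rm g}\sqrt n\,\phi(n)^{-1/2}$ whenever $a \neq b$ are in $\L_n \cup \{0\}$. This dominates the threshold $\sqrt n\,\phi(n)^{-3/5}$ as $\phi(n)\to\infty$, so it suffices to show ${\rm T}(a,b) \geq \E[{\rm T}(a,b)]/2$ with high probability. A concentration estimate using the rotational-invariance identity $\mathrm{Var}({\rm T}(a,b)) = \psi(|a-b|)$ together with the bound $\psi(|a-b|) \leq C|a-b|/\log|a-b|$ of \cite{BDG18} produces a decaying pairwise bound, which survives the union bound over the $O(\phi(n))$ pairs in $\L_n \cup \{0\}$.

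For \eqref{A3}, rotational invariance again gives $\mathrm{Var}({\rm T}(z,y)) = \psi(|z-y|) = |z-y|/\phi(|z-y|)^2 \sim n/\phi(n)^2$, since $|z-y| = \sqrt{n^2+|y|^2} \in [n, n+1]$ for large $n$ (as $y \in \L$ with $|y| \leq \sqrt n$). A Chebyshev bound with threshold $\sqrt n\,\phi(n)^{-2/3}$ then yields a pairwise failure probability of order $\phi(n)^{-2/3}$, and the union bound over the at most $2\lfloor\phi(n)^{1/2}\rfloor + 2$ pairs $(z,y)$ gives a total of order $\phi(n)^{-1/6} \to 0$.

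For \eqref{black:eq}, I split into ${\rm V}_{\eqref{defV}}$ and ${\rm W}_{\eqref{defW}}$. The $V$-part is a direct Poisson calculation: $\P(\Xi \cap {\rm B}(x, \ell^{1/2}) = \emptyset) = e^{-c_d \ell^{d/2}}$ for a dimensional constant $c_d$, and summing over $x \in {\rm B}(y, C_\delta K_n + \ell) \cap \Z^d$ (of cardinality $O((K_n + \ell)^d)$) and integers $\ell \geq K_n^{1/(2\alpha)}$ gives a rapidly convergent series whose tail vanishes as $K_n \to \infty$. For ${\rm W}_{\eqref{defW}}$, the key tool is the power mean inequality: for any path $(x_0,\dots,x_L)$ from $D(a)$ to $D(b)$, $\sum_i|x_i - x_{i-1}|^\alpha \geq L^{1-\alpha}|D(a)-D(b)|^\alpha$. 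Combining this with the density information of ${\rm V}_{\eqref{defV}}$ restricts the admissible values of $L$, and the choice $C_\delta = 4(1+\delta^{-1})$ is tuned so that any path making an excursion outside ${\rm B}(y, 2C_\delta K_n)$ picks up passage time exceeding $\delta|a-b|$ from the excursion alone, forcing the optimal path to remain localized.

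The main obstacle is the ${\rm W}_{\eqref{defW}}$ part: quantitatively matching the lower bound from the power mean against the Poisson density control from ${\rm V}_{\eqref{defV}}$ demands a delicate balance among the scales $K_n$, $C_\delta K_n$, and $\ell$, which is why the density threshold $K_n^{1/(2\alpha)}$ in \eqref{defV} and the precise constant $C_\delta = 4(1+\delta^{-1})$ are chosen as they are. A final union bound over pairs of Poisson points in ${\rm B}(y, 2C_\delta K_n)$, whose number is controlled by Poisson concentration, completes the proof.
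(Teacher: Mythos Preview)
Your treatment of \eqref{A3} and of the ${\rm V}_{\eqref{defV}}$ half of \eqref{black:eq} matches the paper's and is fine. There are, however, two genuine gaps, and both stem from the same missing ingredient: an exponential lower-tail bound for ${\rm T}(a,b)$.

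For \eqref{A2}, Chebyshev is quantitatively too weak. The pairwise bound it produces is
\[
\P\bigl(|{\rm T}(a,b)-\E{\rm T}(a,b)|>\tfrac{{\rm g}}{2}|a-b|\bigr)\le \frac{4\psi(|a-b|)}{{\rm g}^2|a-b|^2}\le \frac{C}{|a-b|\log|a-b|}.
\]
With $|a-b|\ge \sqrt n\,\phi(n)^{-1/2}$ and a union over the $\sim\phi(n)$ pairs in $\L_n\cup\{0\}$, the total is of order $\phi(n)^{3/2}/(\sqrt n\log n)$. Since the only a priori upper bound available is $\phi(n)\le C\sqrt n$, this can be as bad as $n^{1/4}/\log n\to\infty$, so the union bound does \emph{not} survive. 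The paper instead invokes the exponential lower tail of Howard--Newman \cite[Lemma~1]{HN97}, namely $\P({\rm T}(a,b)<\epsilon|a-b|)\le e^{-c|a-b|^\kappa}$ for small $\epsilon$, which beats any polynomial union bound.

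For the ${\rm W}_{\eqref{defW}}$ half of \eqref{black:eq}, the power-mean inequality cannot deliver the needed lower bound. It gives ${\rm T}(\gamma)\ge L^{1-\alpha}|D(a)-D(b)|^\alpha$, but since $\alpha>1$ this degrades as $L$ grows. The density information in ${\rm V}_{\eqref{defV}}$ forces the optimal path to have \emph{no long jumps}, hence \emph{many} steps---a lower bound on $L$, which is precisely the wrong direction. No deterministic inequality of this kind can exclude a path of many tiny jumps with small $\alpha$-cost; one needs a probabilistic statement about how unlikely it is for the Poisson cloud to accommodate such a path. The paper again uses \cite[Lemma~1]{HN97} to get $\P({\rm T}(a,b)\le 4\delta|a-b|)\le e^{-\epsilon|a-b|^\kappa}$ for lattice pairs $a,b\in{\rm B}(y,4C_\delta K_n)\cap\Z^d$, performs a union bound over the $O(K_n^{2d})$ such pairs, and then uses ${\rm V}_{\eqref{defV}}$ only at the end to transfer from lattice points to arbitrary $a,b\in{\rm B}(y,2C_\delta K_n)$.
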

  We pospone the proof until Appendix, but we give some words on the proof here. In fact, \eqref{A2} comes from the linearity of the first passage time, i.e, ${\rm T} (a,b)=O(|a-b|)$ with high probability, and \eqref{A3} comes from the variance $\psi(t)={\rm Var}({\rm T} (0,t\mathbf{e}_1))$ and $\sqrt{n}\,\phi(n)^{-2/3} \gg  \sqrt{n}\,\phi(n)^{-1} =\sqrt{\psi(n)}$. On the other hand, \eqref{black:eq} comes from basic computations of the Poisson point process and the linearity of the first passage time. 

Given $y\in \L_n$,   for the optimal path $(\gamma_y(i))_{i=1}^l=\Gamma(-n\me_1,y,n\me_1)$, we set
  $$s_y=\min\{i\in\{1,\cdots,l\}|\, \gamma_y(i)\in {\rm B}(y,C_{\delta}K_n)\},~t_y=\max\{i\in\{1,\cdots,l\}|\, \gamma_y(i)\in {\rm B}(y,C_{\delta}K_n)\}.$$
  \begin{prop}\label{jump-size}
    On the event $\{y\in {\rm V}_{\eqref{defV}}\}$, 
    $$\max\{|\gamma_y(s_y)-\gamma_y(s_y-1)|,|\gamma_y(t_y)-\gamma_y(t_y+1)|\}\leq (K_n)^{\frac{1}{2\alpha}}+1.$$
  \end{prop}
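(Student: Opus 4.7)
\textbf{Proof plan for Proposition~\ref{jump-size}.}

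The two bounds are symmetric — the exit jump is handled by applying the argument below to the pair $(\gamma_y(t_y+1),\gamma_y(t_y))$ in place of $(\gamma_y(s_y-1),\gamma_y(s_y))$ — so I only treat $|\gamma_y(s_y)-\gamma_y(s_y-1)|$. Write $a:=\gamma_y(s_y-1)$, $b:=\gamma_y(s_y)$, $L:=|a-b|$. By the minimality defining $s_y$ one has $|b-y|\leq C_{\delta}K_n$ and $|a-y|>C_{\delta}K_n$. The plan is to suppose for contradiction that $L>K_n^{1/(2\alpha)}+1$ and to produce $q\in\Xi\setminus\{a,b\}$ with $|a-q|^{\alpha}+|q-b|^{\alpha}<L^{\alpha}$. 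Inserting such a $q$ between positions $s_y-1$ and $s_y$ of $\gamma_y$ yields a path from $D(-n\me_1)$ to $D(n\me_1)$ still visiting $D(y)$ but of strictly smaller passage time, contradicting the optimality of $\gamma_y=\Gamma(-n\me_1,y,n\me_1)$.

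Set $m:=(a+b)/2$ and $p:=\lfloor m\rfloor\in\Z^d$, so $|p-m|\leq\sqrt{d}$; two triangle inequalities give $|m-y|\leq (|a-y|+|b-y|)/2\leq L/2+C_{\delta}K_n$ and hence $|p-y|\leq\sqrt{d}+L/2+C_{\delta}K_n$. Let $\ell:=\lfloor L\rfloor\in\Z$. The assumption $L>K_n^{1/(2\alpha)}+1$ forces $\ell\geq K_n^{1/(2\alpha)}$, and for $n$ sufficiently large — so that $L\geq 2\sqrt{d}+2$ — one also has $\sqrt{d}+L/2\leq\ell$, placing $p$ in ${\rm B}(y,C_{\delta}K_n+\ell)\cap\Z^d$. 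The event $\{y\in {\rm V}_{\eqref{defV}}\}$ then supplies some $q\in\Xi\cap{\rm B}(p,\sqrt{\ell})$.

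By construction $|q-m|\leq\sqrt{\ell}+\sqrt{d}$, so $|a-q|,|b-q|\leq L/2+\sqrt{\ell}+\sqrt{d}$, whence
$$|a-q|^{\alpha}+|q-b|^{\alpha}\leq 2\Bigl(\tfrac{L}{2}+\sqrt{\ell}+\sqrt{d}\Bigr)^{\alpha}=2^{1-\alpha}L^{\alpha}\Bigl(1+\tfrac{2(\sqrt{\ell}+\sqrt{d})}{L}\Bigr)^{\alpha}.$$
Since $\sqrt{\ell}\leq\sqrt{L}$, the bracketed factor is $1+O(L^{-1/2})$; combined with $\alpha>1$ (hence $2^{1-\alpha}<1$) and the fact that $L>K_n^{1/(2\alpha)}+1\to\infty$ as $n\to\infty$, this yields $|a-q|^{\alpha}+|q-b|^{\alpha}<L^{\alpha}$ for $n$ large. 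Finally $|q-a|,|q-b|\geq L/2-\sqrt{\ell}-\sqrt{d}>0$ forces $q\notin\{a,b\}$, closing the contradiction.

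The main obstacle is calibrating the midpoint detour: the small saving factor $2^{1-\alpha}$ must beat the detour cost $(1+O(L^{-1/2}))^{\alpha}$, which is exactly why the threshold $L>K_n^{1/(2\alpha)}+1$ combined with $K_n\to\infty$ is crucial — any shorter jump bound would fail the asymptotic comparison, and placing $q$ closer to $a$ or $b$ rather than near the geometric midpoint would forfeit the clean saving factor $2^{1-\alpha}$ coming from the strict convexity of $r\mapsto r^{\alpha}$.
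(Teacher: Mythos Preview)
Your proof is correct and follows essentially the same approach as the paper: assume the jump is too long, use the event $\{y\in{\rm V}_{\eqref{defV}}\}$ to locate a Poisson point $q$ near the midpoint of the jump, and observe that detouring through $q$ costs at most $2(L/2+O(\sqrt{L}))^\alpha=2^{1-\alpha}L^\alpha(1+o(1))<L^\alpha$, contradicting optimality. You are slightly more explicit than the paper in verifying $\lfloor m\rfloor\in{\rm B}(y,C_\delta K_n+\ell)$ and in noting $q\notin\{a,b\}$ (the latter is in fact unnecessary, since your upper bound $2(L/2+\sqrt{\ell}+\sqrt{d})^\alpha<L^\alpha$ already forces the strict inequality regardless), but the argument is the same.
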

  
  \begin{figure}[b]\label{figure}
  \includegraphics[width=7.3cm]{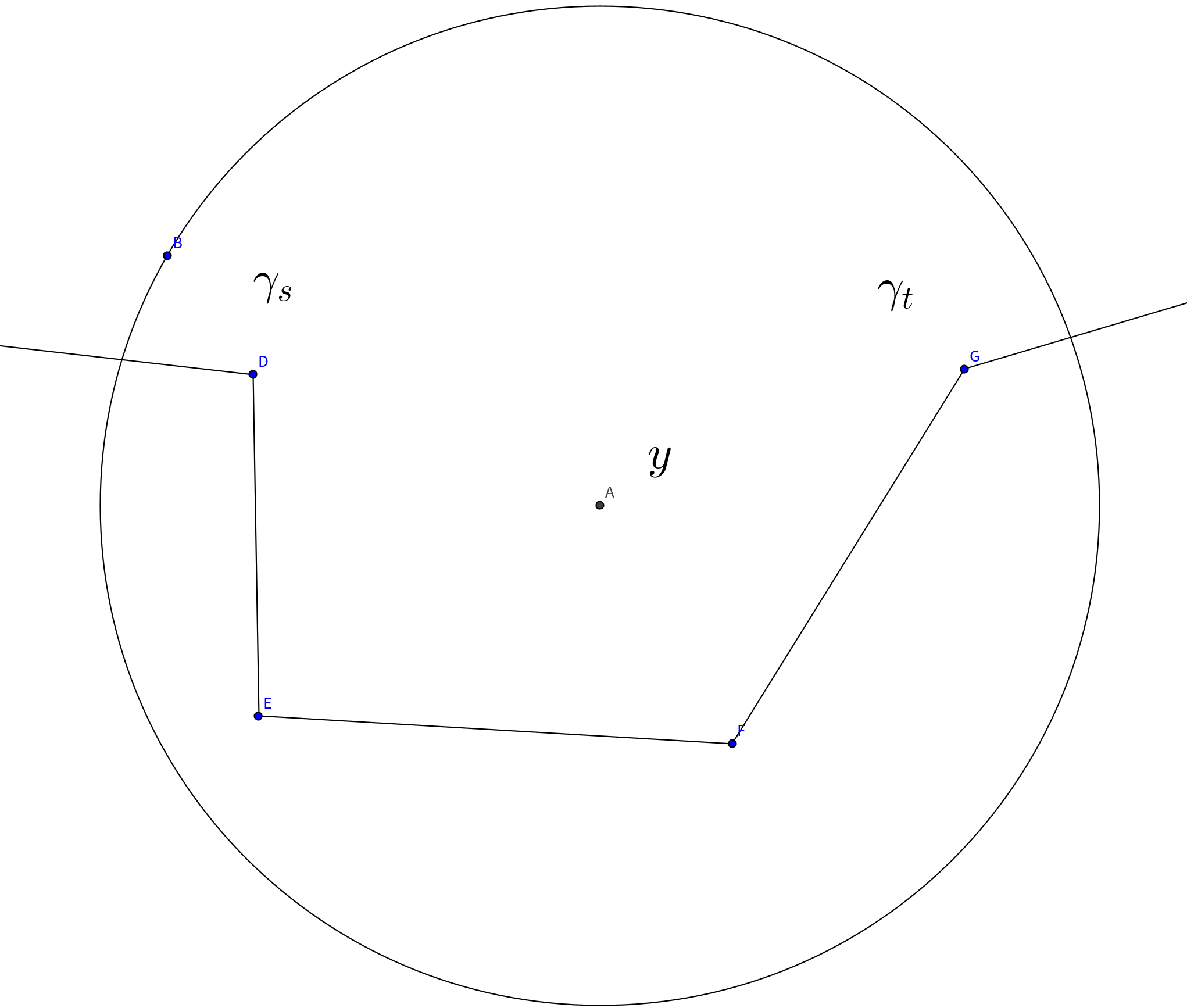}
  \includegraphics[width=7.3cm]{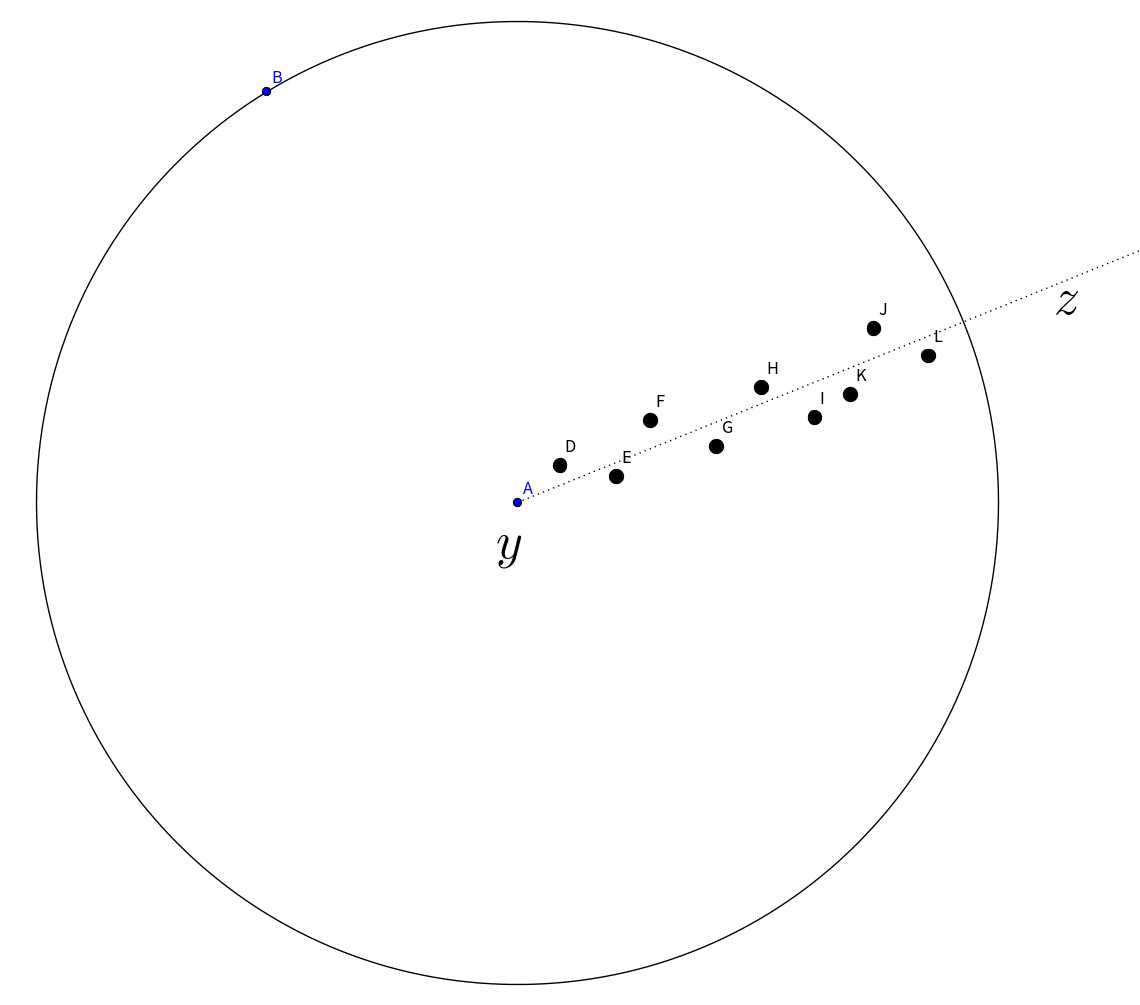}
\caption{}
Left: $\gamma_s$ and $\gamma_t$, Right: Schematic picture of $\kC_{c,y}(z)$ 
  \label{fig:one}
\end{figure}
  \begin{proof}
    For simplicity of notation, we drop subscripts $y$ in the proof such as $s=s_y$, $\gamma_i=\gamma_y(i)$.  Let $\ell=\lf |\gamma_{s}-\gamma_{s-1}|\rf$. Suppose $\ell\geq (K_n)^{\fr{1}{2\alpha}}$ and we shall derive a contradiction.\\

    Since $\left\lf\frac{\gamma_{s}+\gamma_{s-1}}{2}\right\rf\in {\rm B}(y,C_{\delta}K_n+\ell)$ and $y\in {\rm V}_{\eqref{defV}}$,
    $${\rm B}\left(\left\lf\frac{\gamma_{s}+\gamma_{s-1}}{2}\right\rf,\ell^{1/2}\right)\cap \Xi\neq \emptyset.$$
    Let us take $x\in {\rm B}(\left\lf\frac{\gamma_{s}+\gamma_{s-1}}{2}\right\rf,\ell^{1/2})\cap \Xi$. Since the jump $\{\gamma_{s-1},\gamma_{s}\}$ is itself optimal,
    \al{
      \ell^\alpha&\leq |\gamma_{s-1}-\gamma_{s}|^\alpha={\rm T}(\gamma_{s-1},\gamma_{s})\\
      &\leq |\gamma_{s-1}-x|^\alpha+|\gamma_{s}-x|^\alpha\\
      &= \left|\gamma_{s-1}-\frac{\gamma_{s}+\gamma_{s-1}}{2}+\frac{\gamma_{s}+\gamma_{s-1}}{2}-x\right|^\alpha+\left|\gamma_{s}-\frac{\gamma_{s}+\gamma_{s-1}}{2}+\frac{\gamma_{s}+\gamma_{s-1}}{2}-x\right|^\alpha\\
      &\leq 2\left( \left|\frac{\gamma_{s}+\gamma_{s-1}}{2}\right| + \left|\frac{\gamma_{s}+\gamma_{s-1}}{2}-x\right| \right)^\alpha.
    }
    Since $\left|\frac{\gamma_{s}-\gamma_{s-1}}{2}\right|\leq \frac{\ell}{2} +1$ and $\left|\frac{\gamma_{s}+\gamma_{s-1}}{2}-x\right|\leq \ell^{1/2}+d\leq 2\ell^{1/2}$ and $\ell\geq (K_n)^{\fr{1}{2\alpha}}$, for sufficiently large $n$, this is bounded from above by
    \al{
      &\quad 2\left( \frac{1}{2}+3\ell^{-1/2}\right)^\alpha \ell^\alpha<\ell^\alpha.
    }
   Therefore $\ell^\alpha<\ell^\alpha$, which is a contradiction. Thus $\ell< (K_n)^{\fr{1}{2\alpha}}$ and $|\gamma_y(s_y)-\gamma_y(s_y-1)|\leq (K_n)^{\fr{1}{2\alpha}}+1$. Similarly, we obtain $|\gamma_y(t_y)-\gamma_y(t_y+1)|\leq (K_n)^{\fr{1}{2\alpha}}+1$.
    \end{proof}
Given $z_1,z_2\in  {\rm B}(2C_{\delta}K_n)$, we define  the event 
\ben{\label{kB}
 \kB_{\eqref{kB}}(z_1,z_2)= \left\{ |\gamma_y(s_y)-(y+z_1)| \leq d,\,|\gamma_y(t_y)-(y+z_2)| \leq d\right\},
}
where $d$ is the dimension. Given $x\in \R^d$ and $c,K>0$, we define
  $$\Z_{c,K}(x)=\{k\in  \,\Z_{\geq 0}|\,\, 2c\,k|x|\leq K-1\}.$$
Given $y\in\L_n$, $z\in\R^d\backslash\{0\}$ and $c>0$, we define 
$$\kC_{c,y}(z)=\Le\{\forall k\in \Z_{c,C_{\delta}K_n}\Le(\fr{z}{|z|}\Ri),\, \Xi\cap {\rm B}\left(y+2c\,k\fr{z}{|z|},c\right)\neq \emptyset\Ri\}.$$
Roughly speaking, $\kC_{c,y}(z)$ implies that there are ubiquitous points of $\Xi$ around the line segment $\{y+tz|~t\geq 0\}\cap \rmB(y,C_{\delta}K_n)$ (See Figure~\ref{figure}). Note that, for $c<1/4$, $\kC_{c,y}(z)$ depends only on $\Xi\cap {\rm B}(y,C_{\delta}K_t)$.
 Independently of $\Xi$, we take independent random variables $Z_1,Z_2$ with uniform distributions on ${\rm B}(2C_{\delta}K_n)\cap (\Z^d\backslash \{0\})$.
\begin{lem}\label{key:lem}
 If we take $c>0$ sufficiently small such that $4^\alpha c^{\alpha-1}C_{\delta}<\frac{1}{2}$, then for sufficiently large $n>1$ and $y\in \L_n$,
  \begin{equation}
\begin{split}
&\quad\P(\{{\rm T}(-n\me_1,0,n\me_1)-{\rm T}(-n\me_1,y,n\me_1)> K_n\}\cap \mathcal{A}^y_{\eqref{kAy}})\\
  &\ge \min_{z_1,z_2\in {\rm B}(2C_{\delta}K_n)\backslash \{0\}}\P\left(\kC_{c,y}(z_1)\cap \kC_{c,y}(z_2)\right)\P\left( \mathcal{A}_{\eqref{kA3}}\cap\{y\in {\rm Y}_{\eqref{defY}}\}\cap \kB_{\eqref{kB}}(Z_1,Z_2)\right). \label{key} 
  \end{split}
\end{equation}
  \end{lem}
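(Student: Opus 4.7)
The plan is to combine a path-surgery argument with a probability factorization using the spatial independence of the Poisson process. Let $\tilde\P$ denote the joint law of $(\Xi, Z_1, Z_2)$, and set $E_3 := \mathcal{A}_{\eqref{kA3}} \cap \{y \in {\rm Y}_{\eqref{defY}}\} \cap \kB_{\eqref{kB}}(Z_1, Z_2)$. Since the LHS event depends only on $\Xi$, it would suffice to establish (a) that $E_3 \cap \kC_{c,y}(Z_1) \cap \kC_{c,y}(Z_2)$ implies the LHS event, and (b) that
\[
\tilde\P(E_3 \cap \kC_{c,y}(Z_1) \cap \kC_{c,y}(Z_2)) \geq \min_{z_1, z_2} \P(\kC_{c,y}(z_1) \cap \kC_{c,y}(z_2)) \cdot \tilde\P(E_3).
\]

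For (a), I would perform the following surgery. On $E_3$ the optimal path $\Gamma(-n\me_1, y, n\me_1)$ enters $B(y, C_\delta K_n)$ at $\gamma_y(s_y)$ within distance $d$ of $y + Z_1$ and exits at $\gamma_y(t_y)$ within distance $d$ of $y + Z_2$, with boundary jumps of size at most $K_n^{1/(2\alpha)} + 1$ by Proposition~\ref{jump-size}. On $\kC_{c,y}(Z_1) \cap \kC_{c,y}(Z_2)$ there is a Poisson point within distance $c$ of each grid site $y + 2ck \cdot Z_i/|Z_i|$, and I would splice these into an explicit path from $\gamma_y(s_y)$ through $D(y)$ to $\gamma_y(t_y)$ along the two segments from $y$. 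Each internal jump has length $\le 4c$ and the number of jumps is $\le C_\delta K_n / c + O(1)$, so the total $T$-length of the internal portion is at most $4^\alpha c^{\alpha-1} C_\delta K_n + o(K_n) < K_n/2$ by the choice $4^\alpha c^{\alpha-1} C_\delta < 1/2$ (the endpoint jumps, of size $O(1)$, contribute only $O(1)$). This yields $T(-n\me_1, y, n\me_1) \leq T(-n\me_1, \gamma_y(s_y)) + T(\gamma_y(t_y), n\me_1) + K_n/2$, and combining with the concentration estimates of $\mathcal{A}_{\eqref{kA2}}$ applied to $T(\pm n\me_1, w)$ for $w \in \{0\} \cup \L_n$, I would verify both $T(-n\me_1, 0, n\me_1) > T(-n\me_1, y, n\me_1) + K_n$ and $\mathcal{A}^y_{\eqref{kAy}}$.

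For (b), I would condition on $Z_1 = z_1, Z_2 = z_2$. Since $c < 1/4$, the event $\kC_{c,y}(z_i)$ depends only on $\Xi \cap B(y, C_\delta K_n)$, which is Poisson-independent of $\Xi \setminus B(y, C_\delta K_n)$. After further conditioning on $\Xi \setminus B(y, C_\delta K_n)$, the residual dependence of $E_3(z_1, z_2)$ on $\Xi \cap B(y, C_\delta K_n)$ is weak, and a Harris--FKG argument (using that $\kC_{c,y}$ is increasing in the Poisson configuration) should deliver
\[
\P(E_3(z_1, z_2) \cap \kC_{c,y}(z_1) \cap \kC_{c,y}(z_2)) \ge \P(E_3(z_1, z_2)) \cdot \P(\kC_{c,y}(z_1) \cap \kC_{c,y}(z_2)).
\]
Averaging over $(z_1, z_2)$ with the uniform law of $(Z_1, Z_2)$ then yields the desired bound.

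The hard part will be step (a): verifying that the upper bound $T(-n\me_1, y, n\me_1) \leq T_{\mathrm{out}} + K_n/2$ combined with the concentration of $T(\pm n\me_1, 0)$ really forces a margin of $K_n$, since the expected gap $\E T_{|y+n\me_1|} - \E T_n = O(|y|^2/n)$ is dwarfed by the fluctuation scale $\sqrt{n}\phi(n)^{-2/3}$ allowed by $\mathcal{A}_{\eqref{kA2}}$. The argument has to lean crucially on $y \in {\rm W}_{\eqref{defW}}$ (which forces the original inside portion of $\Gamma(-n\me_1, y, n\me_1)$ to be of order $K_n$, so the surgery is a genuine gain) and on $y \in {\rm X}_{\eqref{defX}}$ (which pins $T(-n\me_1, y, n\me_1)$ within $K_n$ of $T(-n\me_1, n\me_1)$); together these make $T(-n\me_1, 0, n\me_1)$ substantially larger than the post-surgery bound on $T(-n\me_1, y, n\me_1)$.
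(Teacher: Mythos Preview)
There is a genuine gap: the events you intersect in step (a) are in fact \emph{disjoint}, so step (b) cannot hold. On $\kC_{c,y}(z)$ the chain $q_0,\dots,q_k\in\Xi$ (with $k\approx C_\delta K_n/(2c)$) connects a point $q_0\in\mathrm B(y,c)$ to a point $q_k$ at distance roughly $C_\delta K_n$ from $y$, and $\mathrm T(q_0,q_k)\le k(4c)^\alpha\le 4^{\alpha}c^{\alpha-1}C_\delta K_n<K_n/2$. But $\{y\in \mathrm W_{\eqref{defW}}\}$ (which is part of $E_3$ via $\mathrm Y_{\eqref{defY}}$) requires $\mathrm T(a,b)\ge\delta|a-b|$ for all $a,b\in\mathrm B(y,2C_\delta K_n)$ with $|a-b|\ge K_n$; taking $a=q_0$, $b=q_k$ gives $\mathrm T(q_0,q_k)\ge\delta(C_\delta K_n-O(1))>2K_n$, a contradiction. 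Hence $\{y\in\mathrm W_{\eqref{defW}}\}\cap\kC_{c,y}(z)=\emptyset$ for every $z\neq 0$, so $E_3\cap\kC_{c,y}(Z_1)\cap\kC_{c,y}(Z_2)=\emptyset$ and no correlation inequality can deliver your step (b). Your own observation that on $\mathrm W$ ``the surgery is a genuine gain'' is precisely this contradiction: in a single fixed configuration the optimal path is already optimal, so exhibiting a strictly cheaper candidate path shows the hypotheses are incompatible, not that the target inequality holds. (Note also that $\mathrm W_{\eqref{defW}}$ is \emph{decreasing} in the Poisson configuration while $\kC_{c,y}$ is increasing, so a Harris--FKG inequality would in any case point the wrong way.)

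The paper avoids this by a resampling argument rather than FKG. It introduces an independent copy $\Xi^*$, sets $\widetilde\Xi=(\Xi\setminus\mathrm B(y,C_\delta K_n))\cup(\Xi^*\cap\mathrm B(y,C_\delta K_n))$, and places the event $\kC_{c,y}(Z_i)$ on $\Xi^*$ while $E_3$ stays on $\Xi$; the factorisation you wanted in (b) then follows from plain independence. The surgery now legitimately compares two \emph{different} configurations: on $\Xi$ the event $\mathrm W_{\eqref{defW}}$ forces $\mathrm T(\gamma_{s-1},y)\ge 2K_n$, while on $\widetilde\Xi$ the chain supplied by $\widetilde\kC$ gives $\widetilde{\mathrm T}(\gamma_{s-1},y)\le K_n$, yielding $\widetilde{\mathrm T}(-n\mathbf e_1,y,n\mathbf e_1)<\mathrm T(-n\mathbf e_1,y,n\mathbf e_1)-2K_n$. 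Combined with $y\in\mathrm X_{\eqref{defX}}$ and a second step showing $\widetilde{\mathrm T}(-n\mathbf e_1,z,n\mathbf e_1)\ge\mathrm T(-n\mathbf e_1,z,n\mathbf e_1)$ for $z\in(\L_n\cup\{0\})\setminus\{y\}$ (this is where $\kA_{\eqref{kA1}}$ and $\kA_{\eqref{kA2}}$ are used), one obtains both the $K_n$-gap and $\kA^y_{\eqref{kAy}}$ for $\widetilde\Xi$, which has the same law as $\Xi$.
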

\begin{proof}
  We first explain the idea of the proof. We start with the event $ \mathcal{A}_{\eqref{kA3}}\cap\{y\in {\rm Y}_{\eqref{defY}}\}$. Then we resample all the configurations in ${\rm B}(y,C_{\delta}K_n)$ and suppose $\kC_{c,y}(Z_1)\cap \kC_{c,y}(Z_2)\cap  \kB_{\eqref{kB}}(Z_1,Z_2)$ after resampling. Then we will check that ${\rm T}(-n\me_1,y,n\me_1)$ decreases by at least $2K_n$. On the other hand, since $y$ and $0$ are far away from each other, ${\rm T}(-n\me_1,0,n\me_1)$ is unchanged. Similarly, we have the same thing for $\{{\rm T}(-n\me_1,z,n\me_1)\}_{z\neq y\in \L_n}$. Thus we get $\{{\rm T}(-n\me_1,0,n\me_1)-{\rm T}(-n\me_1,y,n\me_1)> K_n\}\cap \mathcal{A}^y_{\eqref{kAy}}$ after resampling. To make the above rigorous, we use the resampling argument introduced in~\cite{BK93}.\\
  
  Let $\Xi^*$ be an independent copy of the Poisson point process $\Xi$. We assume that $(\Xi,\Xi^*,Z_1,Z_2)$ are all independent. We enlarge the probability space so that we can measure the event depending on them and we still denote the joint probability measure by $\P$.   We define the resampled Poisson point process as
  $$\widetilde{\Xi}=(\Xi\cap ({\rm B}(y,C_{\delta}K_n))^c )\cup (\Xi^*\cap {\rm B}(y,C_{\delta}K_n)).$$
  We write $\widetilde{\rm T}(a,b)$ for the first passage time from $a$ to $b$ with respect to $\widetilde{\Xi}$. Similarly, we define $\widetilde{\rm T}(a,y,b)$, $\widetilde{\kC}_{c,y}(z)$  etc. Note that the distributions of $\Xi$ and $\widetilde{\Xi}$ are the same under $\mathbb{P}$ since $\Xi$ and $\Xi^*$ are independent. Thus the LHS of \eqref{key}  is equal to
  \al{
    \P(\widetilde{\mathcal{A}}^y_{\eqref{kAy}}\cap \{\widetilde{\rm T}(-n\me_1,0,n\me_1)-\widetilde{\rm T}(-n\me_1,y,n\me_1)> K_n\}),
  }
  where
  $$\widetilde{\mathcal{A}}^y_{\eqref{kAy}}=\{\forall z\in \L_n\text{ with $z\neq y$},~\widetilde{\rm T}(-n\me_1,y,n\me_1)< \widetilde{\rm T}(-n\me_1,z,n\me_1)\}.$$
 By independence of $\Xi$ and $\Xi^*$, the right hand side of \eqref{key} is bounded from above by
 \aln{
   &\quad \sum_{z_1,z_2} \P(Z_1=z_1,Z_2=z_2)\P(\widetilde{\kC}_{c,y}(z_1)\cap \widetilde{\kC}_{c,y}(z_2))\P (\mathcal{A}_{\eqref{kA3}}\cap\{y\in {\rm Y}_{\eqref{defY}}\}\cap \kB_{\eqref{kB}}(z_1,z_2))    \nonumber\\
   &= \P(\widetilde{\kC}_{c,y}(Z_1)\cap \widetilde{\kC}_{c,y}(Z_2)\cap \mathcal{A}_{\eqref{kA3}}\cap\{y\in {\rm Y}_{\eqref{defY}}\}\cap \kB_{\eqref{kB}}(Z_1,Z_2)). \label{key2}
  }
     Thus, it suffices to show that the event inside the probability in \eqref{key2} implies $\widetilde{\mathcal{A}}^y_{\eqref{kAy}}$ and $\widetilde{\rm T}(-n\me_1,0,n\me_1)-\widetilde{\rm T}(-n\me_1,y,n\me_1)> K_n$. To do this, we suppose that $(\Xi,\Xi^*,Z_1,Z_2)$ belongs to the event in \eqref{key2}.\\

      \noindent \underline{Step 1}   We prove that $\widetilde{\rm T}(-n\me_1,y,n\me_1)+ 2K_n<{\rm T}(-n\me_1,y,n\me_1)$. Take the optimal path $(\gamma_i)^{\ell}_{i=1}=\Gamma(-n\me_1,y,n\me_1)$ and let
      $$s=\min \{i\in\{1,\cdots,\ell\}|~\gamma_i\in {\rm B}(y,C_{\delta}K_n)\}\text{ and }t=\max \{i\in\{1,\cdots,\ell\}|~\gamma_i\in {\rm B}(y,C_{\delta}K_n)\}.$$
      Since $|\gamma_s-(y+Z_1)|\leq d$ where $d$ is the dimension, taking $k=\lf(2c)^{-1}(|Z_1|-2d)\rf\lor 0$,  one has $2c k \leq C_{\delta}K_n-1$.  On the event $\widetilde{\kC}_{c,y}(Z_1)$, for any $0\leq k'\leq k$, there exists $q_{k'}\in \widetilde{\Xi }\cap {\rm B}\Le(y+2c\,k' \frac{Z_1}{|Z_1|},c\Ri)$. Then, by Proposition~\ref{jump-size},
            \aln{
        |\gamma_{s-1}-q_k|&\leq  |\gamma_{s-1}-\gamma_s|+\Le|(y+Z_1)-\gamma_s\Ri|+\Le|(y+Z_1)-q_k\Ri| \nonumber\\
&\leq \Le( (K_n)^{\fr{1}{2\alpha}}+1\Ri)+d+\Le|\Le(y+2c\,k \frac{Z_1}{|Z_1|}\Ri)-(y+Z_1)\Ri|+\Le|q_k-\Le(y+2c\,k \frac{Z_1}{|Z_1|}\Ri)\Ri|\nonumber\\
        &\leq \Le( (K_n)^{\fr{1}{2\alpha}}+1\Ri)+d+3d+c\leq 2(K_n)^{\fr{1}{2\alpha}}.\label{jumpjump}
        }
      Since $\gamma_{s-1}\in {\rm B}(y,2C_{\delta}K_t)\backslash{\rm B}(y,C_{\delta}K_n)$ and $C_{\delta}=4(1+\delta^{-1})$, on the event $\kA_{\eqref{kA3}}$,
      \aln{
        {\rm T}(\gamma_{s-1},y)\geq \delta|\gamma_{s-1}-y|\geq \delta C_{\delta}K_n\geq2K_n. \label{jump2}
        }
       Furthermore, on the event $\widetilde{\kC}_{c,y}(Z_1)$,
      \aln{
      \widetilde{\rm T}(q_k,y)&\leq\sum_{i=1}^k |q_i-q_{i-1}|^\alpha\nonumber\\
        &\leq k (4c)^\alpha 
        \leq 4^\alpha c^{\alpha-1} C_{\delta} K_n.\hspace{6mm}(\text{by }  k\leq c^{-1} C_{\delta}K_n) \label{jump3}
        }
      Thus, we have
      \al{
        \widetilde{\rm T}(-n\me_1,y)&\leq \widetilde{\rm T}(-n\me_1,\gamma_{s-1})+\widetilde{\rm T}(\gamma_{s-1},y)&\\
        &\leq {\rm T}(-n\me_1,\gamma_{s-1})+|\gamma_{s-1}-q_k|^\alpha +\widetilde{\rm T}(q_k,y)&\\
        &\leq {\rm T}(-n\me_1,\gamma_{s-1})+2^\alpha K_n^{1/2} + 4^{\alpha} c^{\alpha-1} C_{\delta} K_n\hspace{6mm}&(\text{by }  \eqref{jumpjump},\,\eqref{jump3})\\
        &\leq {\rm T}(-n\me_1,\gamma_{s-1})+ K_n&\left(\text{by }  4^{\alpha} c^{\alpha-1} C_{\delta} < 2^{-1}\right)\\
        &= {\rm T}(-n\me_1,y)-{\rm T}(\gamma_{s-1},y) + K_n <{\rm T}(-n\me_1,y)-K_n. &(\text{by }  \eqref{jump2})
      }
      Similarly, $\widetilde{\rm T}(y,n\me_1)\leq {\rm T}(y,n\me_1)-K_n$ holds. Consequently, we obtain
      $$\widetilde{\rm T}(-n\me_1,y,n\me_1)< {\rm T}(-n\me_1,y,n\me_1)-2 K_n.$$

      \underline{Step 2}  We prove that $\widetilde{\rm T}(-n\me_1,y,n\me_1)+K_n< \widetilde{\rm T}(-n\me_1,z,n\me_1)$ for any $z\in \L_n\cup\{0\}\text{ with $z\neq y$}$.  Let $z\in \L_n\cup\{0\}$ with $z\neq y$. If $\widetilde{\Gamma}(-n\me_1,z,n\me_1)$ does not touch with ${\rm B}(y,C_{\delta}K_n)$, then   one has ${\rm T}(-n\me_1,z,n\me_1) \leq\tilde{{\rm T}}(-n\me_1,z,n\me_1)$ and thus
      \aln{\label{step 2:in}
        \widetilde{\rm T}(-n\me_1,y,n\me_1)&\leq {\rm T}(-n\me_1,y,n\me_1)-2K_n\\
        &\leq {\rm T}(-n\me_1,z,n\me_1)-K_n\hspace{6mm}(\text{by }  y\in {\rm X}_{\eqref{defX}})\notag\\
        &\leq \widetilde{\rm T}(-n\me_1,z,n\me_1)-K_n,\notag
      }
      which is the desired conclusion. Hereafter, we suppose that $ {\rm B}(y,C_{\delta}K_n)\cap \widetilde{\Gamma}(-n\me_1,z,n\me_1)\neq \emptyset$. For the optimal path $(\widetilde{\gamma}_i)_{i=1}^{\widetilde{\ell}}=\widetilde{\Gamma}(-n\me_1,z,n\me_1)$, we define
      $$\widetilde{s}=\min \{i\in\{1,\cdots,\widetilde{\ell}\}|~\widetilde{\gamma}_i\in {\rm B}(y,K_n)\}\text{ and }\widetilde{t}=\max \{i\in\{1,\cdots,\widetilde{\ell}\}|~\widetilde{\gamma}_i\in {\rm B}(y,C_{\delta}K_n)\}.$$
 Then,
      \aln{
        \widetilde{\rm T}(-n\me_1,z)&= \widetilde{\rm T}(-n\me_1,\widetilde{\gamma}_{\widetilde{s}-1})+  \widetilde{\rm T}(\widetilde{\gamma}_{\widetilde{s}-1},\widetilde{\gamma}_{\widetilde{t}+1})+\widetilde{\rm T}(\widetilde{\gamma}_{\widetilde{t}+1},z)\notag\\
        &\geq {\rm T}(-n\me_1,\widetilde{\gamma}_{\widetilde{s}-1})+{\rm T}(\widetilde{\gamma}_{\widetilde{t}+1},z)\notag\\
        &\geq {\rm T}(-n\me_1,y)+{\rm T}(y,z)-{\rm T}(\widetilde{\gamma}_{\widetilde{s}-1},y)-{\rm T}(y,\widetilde{\gamma}_{\widetilde{t}+1}).\label{oppab}
      }
      By $y\in {\rm V}_{\eqref{defV}}$ and the same proof as in Proposition~\ref{jump-size},
      \al{
        {\rm T}(\widetilde{\gamma}_{\widetilde{s}-1},y)&\leq|\widetilde{\gamma}_{\widetilde{s}-1}-D(y)|^\alpha\\
        &\leq \left(|\widetilde{\gamma}_{\widetilde{s}-1}-y|+|D(y)-y|\right)^\alpha\leq (4C_{\delta}K_n)^\alpha.
        }
      Similarly
      ${\rm T}(y,\widetilde{\gamma}_{\widetilde{t}+1})\leq (4C_{\delta}K_n)^\alpha$ holds.
      Furthermore,  on the event $\mathcal{A}_{\eqref{kA3}}$,
      ${\rm T}(y,z)\geq \sqrt{n}\phi(n)^{-3/5}$  holds.
     Thus, \eqref{oppab} is further bounded from below by
      \al{
        &\quad \E {\rm T}(-n\me_1,y) - \sqrt{n}\phi(n)^{-2/3}+\sqrt{n}\phi(n)^{-3/5}-2(4C_{\delta}K_n)^\alpha\\
        &\geq \E {\rm T}(-n\me_1,z)+ \frac{1}{2}\sqrt{n}\phi(n)^{-3/5}\\
        &\geq  {\rm T}(-n\me_1,z). \hspace{6mm}(\text{by }  {\rm T}(-n\me_1,z)\leq \E {\rm T}(-n\me_1,z)+ \sqrt{n}\phi(n)^{-2/3} )
      }
     Similarly, we get $\widetilde{\rm T}(z,n\me_1)\geq {\rm T}(z,n\me_1)$, which implies $\widetilde{\rm T}(-n\me_1,z,n\me_1)\geq {\rm T}(-n\me_1,z,n\me_1)$.  Then, as in \eqref{step 2:in}, we have
      \al{
        \widetilde{\rm T}(-n\me_1,y,n\me_1)\leq \widetilde{\rm T}(-n\me_1,z,n\me_1)-K_n.
      }
    The proof is completed combining these two steps.
\end{proof}
\begin{lem}
  If $\theta<2^{-8d}\,c^d C_{\delta}^{-1}$, then
  $$\min_{z} \P(\kC_{c,y}(z))\geq \exp{\left(-\frac{1}{16}\log{\phi(n)}\right)}.$$
\end{lem}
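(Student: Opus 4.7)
My proof plan is to reduce $\P(\kC_{c,y}(z))$ to a product via Poisson independence, and then bound the single-ball probability from below by an elementary estimate. Writing $\hat z = z/|z|$, so that $|\hat z| = 1$, the index set $\Z_{c,C_{\delta}K_n}(\hat z)$ equals $\{k \in \Z_{\geq 0} : 2ck \leq C_{\delta}K_n - 1\}$ and therefore has cardinality $N$ satisfying $N \leq C_{\delta}K_n/(2c) + 1 \leq C_{\delta}K_n/c$ for all sufficiently large $n$. The balls $\rmB(y + 2ck\hat z, c)$ for $k = 0, 1, \ldots, N-1$ are collinear along $\hat z$, of radius $c$, with consecutive centers at distance exactly $2c$; consequently, their interiors are pairwise disjoint. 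By the independence property of the Poisson point process on disjoint Borel sets, the events $\{\Xi \cap \rmB(y + 2ck\hat z, c) \neq \emptyset\}$ are mutually independent, each with common probability $p := 1 - e^{-v_d c^d}$, where $v_d$ denotes the Lebesgue volume of the unit ball in $\R^d$. Hence $\P(\kC_{c,y}(z)) = p^N$, and in particular this probability does not depend on $z$, so the minimum over $z$ is simply $p^N$.

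It remains to show $N \log(1/p) \leq \frac{1}{16}\log\phi(n)$. For $c$ small enough that $v_d c^d \leq 1$, the elementary inequality $1 - e^{-x} \geq x/2$ on $[0,1]$ yields $p \geq v_d c^d/2$, so
\begin{equation*}
\log(1/p) \leq \log\Le(\frac{2}{v_d c^d}\Ri) \leq C'_d
\end{equation*}
for some constant $C'_d$ depending only on $c$ and $d$ (one may use the inscribed-cube bound $v_d \geq (2/\sqrt d)^d$). Substituting $K_n = \theta\log\phi(n)$ and invoking the hypothesis $\theta < 2^{-8d}c^d C_{\delta}^{-1}$ then gives
\begin{equation*}
N\log(1/p) \leq \frac{C_{\delta}\theta}{c}\, C'_d\,\log\phi(n) \leq 2^{-8d}\, c^{d-1}\, C'_d\,\log\phi(n) \leq \frac{1}{16}\log\phi(n)
\end{equation*}
for all large $n$, because the tiny prefactor $2^{-8d}$ easily absorbs both the dimension-dependent constant $C'_d$ and the factor $c^{d-1}$. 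Exponentiating produces the stated lower bound.

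No serious obstacle is expected here: the argument combines Poisson independence with the standard bound $1 - e^{-x} \geq x/2$, and the explicit smallness condition imposed on $\theta$ is designed precisely to swallow the arising constants with room to spare. The only mildly delicate point is verifying that the chosen numerical threshold $\theta < 2^{-8d}c^d C_{\delta}^{-1}$ is actually small enough to make this bookkeeping work, which reduces to a direct arithmetic check.
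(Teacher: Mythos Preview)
Your argument is correct and follows exactly the paper's route: independence of the Poisson process on the disjoint balls $\rmB(y+2ck\hat z,c)$ reduces $\P(\kC_{c,y}(z))$ to $p^{N}$, and the smallness hypothesis on $\theta$ absorbs the resulting constants. Your deferred arithmetic check $2^{-8d}c^{d-1}\log\bigl(2/(v_d c^{d})\bigr)\le 1/16$ indeed succeeds, since $c^{d-1}d\log(1/c)\le d/((d-1)e)\le 2$ and $|\log v_d|\le C d\log d$, both negligible against $2^{8d-4}$; in fact your write-up is cleaner than the paper's displayed proof, which contains a couple of slips (its exponent $2C_\delta K_n$ undercounts the balls when $c<1/4$, and the line $(1-e^{-v})^{N}=e^{-Nv}$ is not literally correct).
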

\begin{proof}
  We simply calculate
  \al{
    \P(\kC_{c,y}(z))&\geq \left(\P({\rm B}(c)\cap \Xi\neq \emptyset) \right)^{2C_{\delta}K_t}\\
   &= \exp{(-2C_{\delta}K_t\, {\rm Vol}({\rm B}(c)))}\\
    &\geq \exp{\left(-\frac{1}{16}\log{\phi(n)}\right)}. \hspace{6mm} (\text{by }  {\rm Vol}({\rm B}(c))\leq (2c)^d)
    }
  \end{proof}
\begin{proof}[Proof of Proposition~\ref{estimate2}]
   By FKG inequality, we will compute the right hand side of \eqref{key} as 
\aln{
  &\quad \min_{z_1,z_2} \P(\kC_{c,y}(z_1)\cap \kC_{c,y}(z_2))\P(\mathcal{A}_{\eqref{kA3}}\cap\{y\in {\rm Y}_{\eqref{defY}}\}\cap \kB_{\eqref{kB}}(Z_1,Z_2))\label{elminate}\\
  &\geq  \min_{z} \P(\kC_{c,y}(z))^2 \sum_{z_1,z_2} \P(Z_1=z_1,Z_2=z_2)\P(\mathcal{A}_{\eqref{kA3}}\cap\{y\in {\rm Y}_{\eqref{defY}}\}\cap \kB_{\eqref{kB}}(z_1,z_2)).\notag
  }
  Under $\mathcal{A}_{\eqref{kA3}}\cap\{y\in {\rm Y}_{\eqref{defY}}\}$, taking $z_1=\lf \gamma_y(s_y)\rf$ and $z_2=\lf \gamma_y(t_y)\rf$, $\kB_{\eqref{kB}}(z_1,z_2)$ holds and $z_1,z_2\in  {\rm B}(2C_{\delta}K_n)\cap (\Z^d\backslash\{0\})$.  Thus
  \al{
    &\quad\sum_{z_1,z_2\in  {\rm B}(2C_{\delta}K_n)\cap \Z^d} \P(\mathcal{A}_{\eqref{kA3}}\cap\{y\in {\rm Y}_{\eqref{defY}}\}\cap \kB_{\eqref{kB}}(z_1,z_2))\\
    &= \E[\sharp\{(z_1,z_2)|~z_1,\,z_2\in  {\rm B}(y,C_{\delta}K_n)\cap \Z^d,\,\kB_{\eqref{kB}}(z_1,z_2)\};~\mathcal{A}_{\eqref{kA3}}\cap\{y\in {\rm Y}_{\eqref{defY}}\}]\\
    &\geq \P(\mathcal{A}_{\eqref{kA3}}\cap\{y\in {\rm Y}_{\eqref{defY}}\}),
  }
  
  Since $\P(Z_1=z_1,Z_2=z_2)\geq \left(\sharp [{\rm B}(2C_{\delta}K_n)\cap \Z^d] \right)^{-2}\geq (4C_{\delta}K_n)^{-2d}$,  \eqref{A2}, \eqref{A3} and $ K_n=\theta\log{\phi(n)}$, for sufficiently large $n$, \eqref{elminate} is further bounded from below by
  \al{
  &\quad \min_{z} \P(\kC_{c,y}(z))^2 (4C_{\delta}K_n)^{-2d} \P(\mathcal{A}_{\eqref{kA3}}\cap\{y\in {\rm Y}_{\eqref{defY}}\})\notag\\
&\ge \exp{\left(-\frac{1}{8}\log{\phi(n)}\right)}(4C_{\delta}\theta \log{\phi(n)} )^{-2d} \left(\P(y\in {\rm X}_{\eqref{defX}} )-\P(\mathcal{A}_{\eqref{kA3}}(\L_n)^c)- \P(y\notin {\rm V}_{\eqref{defV}}\cup {\rm W}_{\eqref{defW}})\right)\notag\\
&\ge \exp{\left(-\frac{1}{4}\log{\phi(n)}\right)}(\P({\rm T}(-n \me_1,y, n \me_1) -{\rm T}(-n \me_1, n \me_1)\le K_n)-1/4). 
}
By the first moment method, ${\rm T}(-n\me_1 ,y, n \me_1) \geq {\rm T}(-n\me_1, n \me_1)$ and  \eqref{time constant v.s. fpt},

\al{
  \P({\rm T}(-n\me_1 ,y, n \me_1) -{\rm T}(-n\me_1, n \me_1)\le K_n)&= 1 -\P({\rm T}(-n\me_1 ,y, n \me_1) -{\rm T}(-n\me_1, n \me_1)> K_n)\\
  &\geq 1-K_n^{-1}\E\left[{\rm T}(-n\me_1,y, n \me_1) -{\rm T}(-n\me_1, n \me_1)\right]\\
  &\geq 1-K_n^{-1}\E\left[{\rm T}(-n\me_1,y, n \me_1) -2 {\rm g}n \right].
}
With these observations, the proof is completed.
\end{proof}
\section{Appendix}
\subsection{Proof of $\liminf_{n\geq 0} \psi(n)>0$}
In this section, we give a proof of 
\ben{\label{lower bound of variance}
{\rm Var}({\rm T}_n))>c,
}
with some $c>0$ independent of $n$, where we recall that ${\rm T}_n={\rm T}(0,n\mathbf{e}_n)$. Let $\Xi^*$ be an independent copy of $\Xi$ and set $\widetilde{\Xi}=(\Xi\backslash \rmB(2))\cup(\Xi^*\cap \rmB(2))$. Let us denote by $\mathcal{F}$ the $\sigma$-field generated by  $\Xi\backslash \rmB(2)$. We denote by $\widetilde{\rmT}_n$ the first passage time from $0$ to $\mathbf{e}_n$ with respect to $\widetilde{\Xi}$. Then, since martingale differences are uncorrelated, we have
\al{
{\rm Var}({\rm T}_n))&= \E[(\E[\rmT_n|\kF]-\E[\rmT_n])^2]+\E[(\rmT_n-\E[\rmT_n|\kF])^2]\\
&\geq \E[\E[(\rmT_n-\E[\rmT_n|\kF])^2|\kF]]=\E[\E[(\rmT_n-\widetilde{\rmT}_n)^2|\kF]],
}
Hence, it suffice to show that there exists a non-random constant $c>0$ such that for any $n>2$
$$\E[(\rmT_n-\widetilde{\rmT}_n)^2|\kF]>c\qquad\text{a.s.}$$
To this end, we consider the event:
\al{
\kE=\{\Xi\cap \rmB(2)=\emptyset,\,\Xi^*\cap \rmB(1)\neq\emptyset,~\Xi^*\cap (\rmB(2)\backslash \rmB(1))=\emptyset\}.
}
Note that $\kE$ is independent of $\kF$. Since, on the event $\kE$, $D(0)$ (the closest point of $\Xi$ to $0$) is located in $\rmB(2)^c$,  $\widetilde{D}(0)$ (the closest point of $\widetilde{\Xi}$ to $0$) is located in $\rmB(1)$ and $\Xi\backslash \rmB(2)^c=\widetilde{\Xi}\backslash \rmB(2)^c$, we have $\widetilde{\rmT}_n-\rmT_n\geq 1$.  Hence,
\al{
\E[(\rmT_n-\widetilde{\rmT}_n)^2|\kF]&\geq \E[(\rmT_n-\widetilde{\rmT}_n)^2\mathbf{1}(\kE)|\kF]\\
&\geq \E[\mathbf{1}(\kE)|\kF]= \P(\kE),
}
and since $\kE$ is independent of $n$, the proof of \eqref{lower bound of variance} is completed.
\subsection{Proof of Proposition~\ref{estimate-1}}
\begin{proof}[Proof of \eqref{A2}]
  Note that for $a\neq b\in \L_n\cup\{0\}$,
  $$|a-b|\geq n^{1/2}\phi(n)^{-1/2}\gg \sqrt{n}\phi(n)^{-2/3}.$$ By using \cite[Lemma~1]{HN97} and $\phi(n)\leq C\, n^{1/2}$ with some $C>0$,  
  \al{
    \P((\kA_{\eqref{kA1}})^c)&\leq \sum_{a,b\in\L_n\cup\{0\}}\P({\rm T}(a,b)< \sqrt{n}\phi(n)^{-2/3}) \mathbf{1}_{\{a\neq b\}}\\
    &\leq 2\phi(n) \exp{\left(-\e \left(\sqrt{n} \phi(n)^{-1/2}\right)^\kappa\right)}\leq \exp{\left(-\e' \, n^{\kappa/4}\right)},
  }
  with some $\kappa,\e,\e'>0$
  \end{proof}
\begin{proof}[Proof of \eqref{A3}]
  Since for $y\in \L_n\cup \{0\}$, $n\leq |y|\leq  n+1$,  by  Chebyshev's inequality,
  \begin{align*}
   &\quad\P(|{\rm T}(0,y)-\E \,{\rm T}(0,y)|\ge  \sqrt{n}\,\phi(n)^{-2/3})=\P(|{\rm T}_{|y|}-\E{\rm T}_{|y|}|\ge  \sqrt{n}\,\phi(n)^{-2/3})\\
    &\leq  \P\left(|{\rm T}_n-\E {\rm T}_n|+{\rm T}(n\mathbf{e}_1,|y|\mathbf{e}_1)+\E {\rm T}(n\mathbf{e}_1,|y|\mathbf{e}_1) \ge  \sqrt{n}\,\phi(n)^{-2/3}\right)\\
    &\leq \P\left(|{\rm T}_n-\E {\rm T}_n| \ge  \frac{1}{3}\sqrt{n}\,\phi(n)^{-2/3}\right)+\P\left({\rm T}_{|y|-n}\geq \frac{1}{3}\sqrt{n}\,\phi(n)^{-2/3}\right)\\
    &\leq\frac{9\phi(n)^{4/3}}{n} \left(\E {\rm T}_{|y|-n}^2+{\rm Var}({\rm T}_n)\right)\leq C \phi(n)^{-2/3}
  \end{align*}
  with some constant $C>0$ independent of $y$ and $n$.
  Then by the union bound, we have
  \al{
      \P(\kA_{\eqref{kA2}}^c)&=\P(\exists y\in \L_{n}\cup\{0\}\text{ such that }\max_{z=-n\me_1,n\me_1}\{|{\rm T}(z,y)-\E\,{\rm T}(z,y)|\}\ge \sqrt{n}\,\phi(n)^{-2/3})\\
      &\leq 2\sharp \L_{n}\sup_{y\in  \L_{n}\cup\{0\}} \P(|{\rm T}(0,y)-\E\,{\rm T}(0,y)|\ge \sqrt{n}\,\phi(n)^{-2/3})\\
      &\leq 2C \phi(n)^{1/2}\,\phi(n)^{-2/3}=2C \phi(n)^{-1/6}.
     }
\end{proof}
\begin{proof}[Proof of \eqref{black:eq}]
  We first prove that
  $$\lim_{n\to\infty}\inf_{\L_n}\P(y\in {\rm V}_{\eqref{defV}})=1.$$
    Indeed, by the union bound,
    \al{
    \P(\{y\in {\rm V}_{\eqref{defV}}\}^c)&\leq \sum_{\ell\geq (K_n)^{\fr{1}{2\alpha}}}\sum_{x\in{\rm B}(y,C_{\delta}K_n+\ell)\cap\Z^d}\P(\Xi\cap {\rm B}(x,\ell^{1/2})=\emptyset)\\
    &= \sum_{\ell\geq (K_n)^{\fr{1}{2\alpha}}}\sharp({\rm B}(y,C_{\delta}K_n+\ell)\cap\Z^d)\,\P(\Xi\cap {\rm B}(\ell^{1/2})=\emptyset)\\
    &\leq 2^d \sum_{\ell\geq (K_n)^{\fr{1}{2\alpha}}} (K_n+\ell)^d \exp{(-\rm{Vol}({\rm B}(\ell^{1/2})))} \to 0,\text{ \hspace{3mm} as $n\to\infty$.}
    }
    It remains to prove
    $$\P(\{\text{$\exists a,b\in {\rm B}(2C_{\delta} K_n)$ s.t. $|a-b|\ge K_n$ and ${\rm T}(a,b)\geq \delta|a-b|$}\}\cap \{y\in {\rm V}_{\eqref{defV}}\})\to 0.$$
    First we note that by \cite[Lemma~1]{HN97}, for sufficiently small $\delta$,  
    \al{
      &\quad \P(\text{$\exists a,b\in {\rm B}(y,4C_{\delta} K_n)\cap \Z^d$ such that $|a-b|\ge K_n/2$ and }{\rm T}(a,b)\leq 4\delta|a-b|)\\
      &\leq \sum_{a,b\in {\rm B}(y,4C_{\delta} K_n)\cap \Z^d} \P({\rm T}(a,b)\leq 4\delta|a-b|)\mathbf{1}_{\{|a-b|\ge K_n/2\}}\nonumber\\
      &\leq (8C_{\delta} K_n)^{2d} \exp{(-\e K_n^\kappa)}\leq \exp{\Le(-\frac{\e}{2} K_n^\kappa\Ri)},\nonumber
    }
    with some $\e,\kappa>0$.  Hereafter, we suppose that $y\in {\rm V}_{\eqref{defV}}$ and for any $a,b\in {\rm B}(y,4C_{\delta} K_n)\cap \Z^d$ with $|a-b|\ge K_n/2$, ${\rm T}(a,b)> 4\delta|a-b|$. Let $a,b\in {\rm B}(y,2C_{\delta} K_n)$ with $|a-b|\geq K_n$. Then by $y\in {\rm V}_{\eqref{defV}}$,
    $$\max\{|D(a)-a|,|D(b)-b|\}\leq 2 (K_n)^{\fr{1}{2\alpha}}.$$
    Similarly,  $\max\{|D(\lf a\rf)-\lf a\rf|,|D(\lf b\rf)-\lf b\rf|\}\leq 2 (K_n)^{\fr{1}{2\alpha}}$. Hence,
    \ben{\label{concest}
      \max\{|D(a)-D(\lf a \rf)|,|D(b)-D(\lf b\rf)|\}\leq 6 (K_n)^{\fr{1}{2\alpha}}.
    }
    Since $\lf a\rf,\lf b \rf\in  {\rm B}(y,4C K_n)\cap  \Z^d$ and $|\lf a\rf-\lf b\rf|\ge |a-b|/2\geq K_n/2$,
    \al{
      {\rm T}(a,b)&={\rm T}(D(a),D(b))\\
      &\geq {\rm T} (D(\lf a\rf),D(\lf b\rf ))-{\rm T} (D(a),D(\lf a \rf))- {\rm T} (D(b),D(\lf b \rf))\\
      &\geq {\rm T}(D(\lf a\rf),D(\lf b\rf ))-|D(a)-D(\lf a \rf)|^\alpha - |D(b)-D(\lf b \rf)|^\alpha\\
     &\geq 2\delta|a-b|  - 12^\alpha (K_n)^{\fr{1}{2}}\geq \delta|a-b|.\hspace{6mm}(\text{by }  \eqref{concest})
      }
   Therefore, the proof is completed.
    \end{proof}



\begin{thebibliography}{99}

  \bibitem{Alex97}
  Kenneth S. Alexander. \newblock Approximation of subadditive functions and convergence rates in limiting-shape results. 
\newblock {\em Ann. Probab.} 25, 30--55, 1997. \MR{1428498}
  
  \bibitem{ADH}
  A. Auffinger, M. Damron, J. Hanson,
\newblock 50 years of first-passage percolation. 
\newblock {\em University Lecture Series, 68. American Mathematical Society, Providence, RI}, 2017. \MR{3729447}

\bibitem{ADH15}
   A. Auffinger, M. Damron, and J. Hanson. \newblock Rate of convergence of the mean for sub-additive ergodic sequences. \MR{3406498}


\bibitem{BDG18}
  M. Bernstein, M. Damron, T. Greenwood
  \newblock Sublinear variance in Euclidean first-passage percolation. {\it preprint}

  
  \bibitem{BK93}
J. van den Berg and H. Kesten. \newblock Inequalities for the time constant in
first-passage percolation.
\newblock {\em Ann. Appl. Probab.} 56--80, 1993. \MR{1202515}


\bibitem[CC18]{CC18}
\textsc{F. Comets} and \textsc{C. Cosco}, \newblock{Brownian Polymers in Poissonian Environment: a survey}, \newblock{arXiv:1805.10899.}  (2018)


\bibitem{DK16}
M. Damron and N. Kubota. \newblock Rate of convergence in first-passage percolation under low moments.
\newblock {\em Stochastic Process. Appl.} 126 (10), 3065--3076, 2016

 \bibitem{DW16}
M. Damron and X. Wang. \newblock Entropy reduction in Euclidean first-passage percolation.
\newblock {\em Electron. J. Probab.} 21 (65), 1--23, 2016. \MR{3580031}

\bibitem{HN97}
C.~D. Howard and C.~M. Newman.
\newblock Euclidean models of First-passage percolation.
\newblock {\em Probability Theory and Related Fields}, 108, 153 -- 170 1997.

  \bibitem{Kes93}
  Harry Kesten. \newblock On the speed of convergence in first-passage percolation.
{\em Ann. Appl. Probab.} 3, 296-338, 1993.  \MR{1221154}
  
\bibitem{N19} 
  Shuta Nakajima.
  \newblock Divergence of non-random fluctuation in First Passage Percolation
  \newblock {\em Electron. Commun. Probab.} 24 (65), 1--13. 2019. \MR{4029434}


\end{thebibliography}
\end{document}